\documentclass[12pt]{article}
\usepackage{e-jc}
\specs{}{}{}


\usepackage{amsthm,amsmath,amssymb}
\usepackage{mathrsfs}

\usepackage{graphicx}

\usepackage[colorlinks=true,citecolor=black,linkcolor=black,urlcolor=blue]{hyperref}


\sloppy

\theoremstyle{plain}
\newtheorem{theorem}{Theorem}
\newtheorem{lemma}{Lemma}
\newtheorem{corollary}{Corollary}
\newtheorem{proposition}{Proposition}

\theoremstyle{definition}
\newtheorem{definition}{Definition}
\newtheorem{example}{Example}
\newtheorem{conjecture}{Conjecture}

\theoremstyle{remark}
\newtheorem{remark}[theorem]{Remark}

\numberwithin{equation}{section}


\newcommand{\abs}[1]{\left\vert#1\right\vert}

\newcommand{\A}{\mathfrak A}
\newcommand{\I}{\mathscr{I}}
\newcommand{\Text}[1]{\text{\textnormal{#1}}}



\title{\bf Parity Types, Cycle Structures \\ and Autotopisms of Latin Squares}


\author{Daniel Kotlar\thanks{The author thanks Ian Wanless for providing a large part of the code needed to perform the computer simulations, and an anonymous referee for many helpful comments.}\\
\small Department of Computer Science\\[-0.8ex]
\small Tel-Hai College\\[-0.8ex]
\small Upper Galilee 12210, Israel\\
\small\tt dannykot@telhai.ac.il\\
}

\date{\dateline{Jan 10, 2012}{Jul 10, 2012}\\
\small Mathematics Subject Classifications: 68R05, 05B15}

\begin{document}
\maketitle

\begin{abstract}
  The parity type of a Latin square is defined in terms of the numbers of even and odd rows and columns. It is related to an Alon-Tarsi-like conjecture that applies to Latin squares of odd order. Parity types are used to derive upper bounds for the size of autotopy groups. A new algorithm for finding the autotopy group of a Latin square, based on the cycle decomposition of its rows, is presented, and upper bounds for the size of autotopy groups are derived from it.

  \bigskip\noindent \textbf{Keywords:} Latin square, parity type, cycle structure, autotopy group.
\end{abstract}

\section{Introduction}\label{sec1}
For a given positive integer $n$ let $[n]$ denote the set $\{1,\ldots,n\}$.
A \emph{Latin square} of order $n$ is an $n\times n$ array of numbers in $[n]$ so that each number appears exactly once in each row and column.
Let $LS(n)$ be the set of Latin squares of order $n$ and let $S_n$ be the symmetric group of permutations of $[n]$.
The group $S_n^3=S_n\times S_n \times S_n$ acts on $LS(n)$ by \emph{isotopism}.
An isotopism is a triple $(\alpha,\beta,\gamma)\in S_n^3$ that acts on $LS(n)$, so that $\alpha$ permutes the rows, $\beta$ permutes the columns and $\gamma$ permutes the symbols of a given Latin square.
Two Latin squares are called \emph{isotopic} if there is an isotopism that transforms one to the other.
Being isotopic is an equivalence relation and the set of Latin squares that are isotopic to each other is an \emph{isotopy class}.
Let $\I_n=S_n^3$ be the group of isotopisms on $LS(n)$, $\I_n(L)$ the isotopy class of $L$, and $\A(L)=\{\Theta\in \I_n|\Theta(L)=L\}$ the \emph{autotopy group} of $L$ (also denoted in the literature by $\Text{Is}(L)$ \cite{MckayWan05}, $\Text{Atp}(L)$ \cite{Bro2012,StWan12}, and $\Text{Atop}(L)$ \cite{Stones2010}), whose members are called \emph{autotopisms} of $L$.
We have that $|\I_n|=(n!)^3$ and $\abs{\I_n(L)}=(n!)^3/\abs{\A(L)}$.
\\
The rows and columns of a Latin square can be viewed as permutations of the ordered set $(1,2,\ldots,n)$. A \emph{reduced} (also called \emph{normalized}) Latin square is a Latin square whose first row and first column are the identity permutation. Let $RLS(n)$ be the set of reduced Latin squares of order $n$. For $L\in RLS(n)$ let $\I_{n,L}^\prime=\{\Theta\in\mathscr{I}_n|\Theta(L)\in RLS(n)\}$ and let $\I_n^\prime(L)=\I_n(L)\cap RLS(n)$. Any Latin square can be transformed by isotopism to a reduced Latin square by permuting the rows to form the identity permutation in the first column and then permuting the columns $2,\ldots,n$ to form the identity permutation in the first row. Thus, each reduced Latin square is isotopic to $n!(n-1)!$ distinct Latin squares in this way. It follows that $\abs{\I_n(L)}/\abs{\I_n^\prime(L)}=n!(n-1)!$ and hence $\abs{\I_{n,L}^\prime}=n\cdot n!$ and
\begin{equation}\label{eq1:2}
\abs{\I_n^\prime(L)}=n\cdot n!/\abs{\A(L)}
\end{equation}
For these and other facts about isotopisms and autotopisms the reader is referred to D\'{e}nes and A. D. Keedwell's book \cite{Denes74} and to the works of Janssen \cite{Janssen95}, McKay and Wanless \cite{MckayWan05} and McKay, Meynert and Myrvold \cite{MckayMM07} among many others.
\\
The \emph{sign} (or \emph{parity}) function $\Text{sgn}:S_n\rightarrow \{-1,1\}$ is defined so that $\Text{sgn}(\sigma)=1$ if and only if $\sigma$ is an even permutation. The \emph{parity} of a Latin square $L$, denoted $\Text{par}(L)$, is the product of the signs of all its rows and columns. A Latin square is \emph{even} (resp. \emph{odd}) if its parity is 1 (resp. -1). Let $ELS(n)$ (resp. $OLS(n)$) be the set of even (resp. odd) Latin squares of order $n$. Let $RELS(n)$ and $ROLS(n)$ be the corresponding sets of reduced Latin squares. If $n$ is odd $\abs{ELS(n)}=\abs{OLS(n)}$. If $n$ is even, a known conjecture of Alon and Tarsi \cite{AlonTarsi92} states that $\abs{ELS(n)}\neq\abs{OLS(n)}$. It was also conjectured in \cite{StWan12} that $\abs{RELS(n)}\neq\abs{ROLS(n)}$ for all $n$ (for even $n$ this is equivalent to the Alon-Tarsi conjecture). For further results regarding even and odd Latin squares, the reader is referred to the works of Janssen \cite{Janssen95}, Drisko \cite{Drisko97, Drisko98}, Zappa \cite{Zappa97}, Glynn \cite{Glynn10} and Stones and Wanless \cite{StWan12}.
\\
This work is mainly concerned with the relation between parities of Latin squares and of their rows and columns and autotopy groups. In Section~\ref{sec2} an expression for the members of $\I_{n,L}^\prime$ is derived and the notions of \emph{parity type} and \emph{parity class} are introduced. These lead to expressions for the numbers of even and odd reduced Latin squares of odd order in an isotopy class. These expressions, although stated and proved differently coincide with results of Stones and Wanless \cite{StWan12}. In Section~\ref{sec3} an extension of the Alon-Tarsi Latin square conjecture is discussed and a related conjecture on parity types is presented. In Sections~\ref{sec5} and \ref{sec6} bounds are obtained for the order of the autotopy group $\A(L)$. In Section~\ref{sec5} the bounds are obtained using parities of the rows and columns of $L$ and in Section~\ref{sec6} the bounds are derived from the cycle structure of the rows of $L$.
\\
\textbf{Convention:} Throughout the manuscript, when viewing rows and columns of a Latin square as permutations in $S_n$, the following convention will hold: the number $i$ appearing in the $j$th place of the row (column) $\sigma$ of $L$ signifies that $\sigma(i)=j$. By this convention, in order to transform a column (or row) $\sigma$ to the identity permutation, we have to apply the permutation $\sigma^{-1}$ to the rows (resp. columns) of $L$ (see Lemma~\ref{lem1}(i)). It is also assumed that permutations are applied from right to left.
\section{Parity types and  parity classes}\label{sec2}
A permutation acting on a Latin square acts on another permutation (a row or a column) in two different ways, as described by the following lemma. Recall the convention of this paper that when $i$ appears in the $j$th place of a row (or column) $\pi$ it means that $\pi(i)=j$.
\begin{lemma}\label{lem1}
Let $L$ be a Latin square of order $n$.
\begin{enumerate}
\item[\Text{(i)}] The result of permuting the rows (resp. columns) of $L$, by $\alpha\in S_n$, on any column (resp. row) $\pi$ is $\alpha\pi$.
\item[\Text{(ii)}] The result of permuting the symbols of $L$, by $\alpha\in S_n$, on any row or column $\pi$ is $\pi\alpha^{-1}$.
\end{enumerate}
\end{lemma}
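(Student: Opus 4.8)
The plan is to verify both parts by a direct bookkeeping argument: track where each symbol sits before and after the permutation is applied, then translate the result back into the permutation representation dictated by the paper's convention. Throughout I write $L(r,c)$ for the entry of $L$ in row $r$, column $c$, and I fix the action convention that permuting the rows by $\alpha$ produces the square $L'$ with $L'(\alpha(r),c)=L(r,c)$, equivalently $L'(r,c)=L(\alpha^{-1}(r),c)$ (and symmetrically for columns and for symbols). The one point I must keep in front of me is that the convention reads a row or column permutation $\pi$ as $\pi(i)=j$ precisely when symbol $i$ occupies place $j$; that is, $\pi$ sends a \emph{symbol} to its \emph{position}, the inverse of the more familiar ``position-to-symbol'' reading, so I should expect an inverse to surface wherever I substitute. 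As a sanity check, this setup is forced to be consistent with the remark in the Convention that transforming a column $\sigma$ to the identity requires applying $\sigma^{-1}$ to the rows, since part~(i) would then yield $\sigma^{-1}\sigma=\mathrm{id}$.

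For part~(i), take a column $\pi$ of $L$, let $L'$ be obtained by permuting the rows with $\alpha$, and let $\pi'$ be the corresponding column of $L'$. By the convention $\pi'(i)=j$ means symbol $i$ sits in row $j$ of that column of $L'$, i.e.\ $L'(j,c)=i$; rewriting via $L'(j,c)=L(\alpha^{-1}(j),c)$ this says symbol $i$ sits in row $\alpha^{-1}(j)$ of the original column, i.e.\ $\pi(i)=\alpha^{-1}(j)$, whence $j=\alpha(\pi(i))$ and therefore $\pi'=\alpha\pi$ (composing right to left). The parenthetical ``resp.'' case—permuting columns and examining a row—is identical after interchanging the roles of rows and columns, and I would simply invoke that row/column symmetry rather than repeat the computation.

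For part~(ii), permuting the symbols by $\alpha$ replaces each entry $s$ by $\alpha(s)$, so $L'(r,c)=\alpha(L(r,c))$. For a row (or column) $\pi$ with image $\pi'$, the relation $\pi'(i)=j$ now reads $L'(r,j)=i$, i.e.\ $\alpha(L(r,j))=i$, i.e.\ $L(r,j)=\alpha^{-1}(i)$, which by the convention is exactly $\pi(\alpha^{-1}(i))=j$; hence $\pi'=\pi\alpha^{-1}$, with no distinction needed between the row and column cases.

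I expect the only real subtlety to be the inverse bookkeeping rather than any individual substitution. Because the convention stores ``symbol $\mapsto$ position,'' permuting rows or columns is a \emph{position} operation that composes on the left (giving $\alpha\pi$ with no inverse), whereas permuting symbols composes on the right and drags in an inverse (giving $\pi\alpha^{-1}$). Keeping these two opposite directions consistent, together with the right-to-left composition convention, is the main place an error could creep in; once the convention is pinned down, each step reduces to a single routine rewriting of one entry relation.
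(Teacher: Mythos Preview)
Your proof is correct and follows essentially the same bookkeeping argument as the paper: you track where a given symbol sits before and after the permutation and read off the composite via the convention $\pi(i)=j \Leftrightarrow$ ``symbol $i$ in place $j$.'' The only difference is notational—you formalize the action via $L'(\alpha(r),c)=L(r,c)$ and $L'(r,c)=\alpha(L(r,c))$, whereas the paper phrases the same moves verbally—but the logic and the resulting identities $\pi'=\alpha\pi$ and $\pi'=\pi\alpha^{-1}$ are derived identically.
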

\begin{proof}
(i) Let $\pi$ be a row or a column. Suppose $\pi(i)=j$ and $\alpha(j)=k$ then $i$ appears in the $j$th place of $\pi$, and after applying $\alpha$, the number in the $j$th place of $\pi$ ($i$ in this case) moves to the $k$th place. Thus the resulting permutation takes $i$ to $k$, so it is $\alpha\pi$.
\\
(ii) Denote by $\alpha(\pi)$ the permutation obtained by applying $\alpha$ to the each symbol In the row (or column) $\pi$. Suppose $\pi(i)=j$ and $\alpha(i)=k$. Since $i$ is in the $j$th place of $\pi$, $k$ is in the $\j$th place of $\alpha(\pi)$. Thus $\alpha(\pi)(k)=j$. It follows that $\alpha(\pi)=\pi\alpha^{-1}$.
\end{proof}
The following proposition appears in a different form as part of the proof of Theorem 2.1 in \cite{StWan12}. It describes the $n\cdot n!$ elements in the set $\I_{n,L}^\prime$.
\begin{proposition}\label{prop2:1}
Let $L$ be a reduced Latin square. For $j=1,\ldots,n$ let $\sigma_j$ (resp. $\pi_j$) be the $j$th row (resp. column) of $L$. For any permutation $\alpha\in S_n$ and any $j=1,\ldots,n$, $(\alpha, \alpha\pi_j\sigma_{\alpha^{-1}(1)}^{-1}, \alpha\pi_j)\in \I_{n,L}^\prime$ and any element of $\I_{n,L}^\prime$ is of this form.
\end{proposition}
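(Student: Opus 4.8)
The plan is to describe the rows and columns of $\Theta(L)$ as permutations via Lemma~\ref{lem1}, and then impose the condition that $\Theta(L)$ be reduced. Fix $\Theta=(\alpha,\beta,\gamma)$. Permuting the columns of $L$ by $\beta$ turns a row $\sigma$ into $\beta\sigma$, and permuting the symbols by $\gamma$ turns it into $\beta\sigma\gamma^{-1}$ (Lemma~\ref{lem1}(i),(ii)), while permuting the rows by $\alpha$ merely relabels which row is which: the original row $\sigma_r$ lands in position $\alpha(r)$. Hence the row occupying position $r'$ in $\Theta(L)$ is the permutation $\beta\sigma_{\alpha^{-1}(r')}\gamma^{-1}$. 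Dually, the column occupying position $c'$ in $\Theta(L)$ is $\alpha\pi_{\beta^{-1}(c')}\gamma^{-1}$. These two formulas are the engine of the whole argument, and deriving them carefully (keeping the position relabelling separate from the internal permutation) is where I would spend most of the setup.

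I would first prove that every element has the stated form, which is the cleaner direction. A Latin square is reduced precisely when its first row and first column are the identity, that is, when
\[
\beta\,\sigma_{\alpha^{-1}(1)}\,\gamma^{-1}=\mathrm{id}
\qquad\text{and}\qquad
\alpha\,\pi_{\beta^{-1}(1)}\,\gamma^{-1}=\mathrm{id}.
\]
Writing $j=\beta^{-1}(1)$, the second equation gives $\gamma=\alpha\pi_j$, and substituting into the first gives $\beta=\gamma\sigma_{\alpha^{-1}(1)}^{-1}=\alpha\pi_j\sigma_{\alpha^{-1}(1)}^{-1}$. Thus any $\Theta\in\I_{n,L}^\prime$ equals $(\alpha,\alpha\pi_j\sigma_{\alpha^{-1}(1)}^{-1},\alpha\pi_j)$ for some $\alpha\in S_n$ and some $j\in[n]$.

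For the forward inclusion I would fix arbitrary $\alpha\in S_n$ and $j\in[n]$, set $\gamma=\alpha\pi_j$ and $\beta=\alpha\pi_j\sigma_{\alpha^{-1}(1)}^{-1}$, and verify both reduced conditions. The first-row condition is immediate group algebra: $\beta\sigma_{\alpha^{-1}(1)}\gamma^{-1}=\alpha\pi_j\sigma_{\alpha^{-1}(1)}^{-1}\sigma_{\alpha^{-1}(1)}(\alpha\pi_j)^{-1}=\mathrm{id}$. The first-column condition instead requires knowing that $\beta^{-1}(1)=j$, and this is the one genuinely non-formal step: I must compute $\beta(j)=\alpha\pi_j\sigma_{\alpha^{-1}(1)}^{-1}(j)$ and show it equals $1$. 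Here I would unwind the paper's conventions — $\sigma_{\alpha^{-1}(1)}^{-1}(j)$ is the symbol lying in column $j$ of row $\alpha^{-1}(1)$, and applying $\pi_j$ to that symbol returns its row index, namely $\alpha^{-1}(1)$ — so that $\pi_j\sigma_{\alpha^{-1}(1)}^{-1}(j)=\alpha^{-1}(1)$ and hence $\beta(j)=\alpha(\alpha^{-1}(1))=1$. With $\beta^{-1}(1)=j$ established, the first-column formula collapses to $\alpha\pi_j(\alpha\pi_j)^{-1}=\mathrm{id}$.

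I expect this last computation — the identity $\pi_j\sigma_{\alpha^{-1}(1)}^{-1}(j)=\alpha^{-1}(1)$ that ties the index $j$ back to $\beta^{-1}(1)$ — to be the main obstacle, since it is the only place where the incidence structure of $L$ (rather than pure group composition) enters, and it is easy to introduce sign or direction errors while juggling the ``$\sigma(i)=j$'' convention together with right-to-left composition. Finally, as a consistency check I would observe that the parametrization ranges over $n$ values of $j$ and $n!$ values of $\alpha$, matching the count $\abs{\I_{n,L}^\prime}=n\cdot n!$ established above; this confirms the description is exhaustive and, in fact, that the map $(\alpha,j)\mapsto\Theta$ is a bijection onto $\I_{n,L}^\prime$.
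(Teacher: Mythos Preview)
Your proof is correct and follows essentially the same route as the paper's: both arguments use Lemma~\ref{lem1} to express the rows and columns of $\Theta(L)$ as permutations and then read off $\beta$ and $\gamma$ from the requirement that the first row and first column be the identity. The paper organizes the derivation around the single observation that the first row and first column of $(\alpha,\beta,1)(L)$ must coincide (both equal $\gamma$), whereas you set up the two equations $\beta\sigma_{\alpha^{-1}(1)}\gamma^{-1}=\mathrm{id}$ and $\alpha\pi_{\beta^{-1}(1)}\gamma^{-1}=\mathrm{id}$ directly; your version is slightly more explicit in verifying the forward inclusion, in particular the computation $\beta(j)=1$ via $\pi_j\sigma_{\alpha^{-1}(1)}^{-1}(j)=\alpha^{-1}(1)$, which the paper leaves implicit.
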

\begin{proof}
Let $\Theta=(\alpha,\beta,\gamma)$ be such that $\Theta(L)\in RLS(n)$. Since $\gamma$ is applied to all the symbols, the first row and first column of $(\alpha,\beta,1)(L)$ must be identical. Hence, after permuting the rows of $L$ by $\alpha$, $\beta$ needs to permute the columns of $(\alpha, 1, 1)(L)$ to yield a first row that is identical to one of the columns of $(\alpha, 1, 1)(L)$. Note that the first row of  $(\alpha, 1, 1)(L)$ is $\sigma_{\alpha^{-1}(1)}$ and the $j$th column of $(\alpha, 1, 1)(L)$ is $\alpha\pi_j$ (Lemma~\ref{lem1}(i)). Thus, after permuting the columns of $(\alpha, 1, 1)(L)$ by $\beta=\alpha\pi_j\sigma_{\alpha^{-1}(1)}^{-1}$ the first row of the resulting square is $\alpha\pi_j$ and, since $\alpha\pi_j$ is also one of the columns (it is was the $j$th column before the columns were permuted), it must be the first column of $(\alpha, \beta, 1)(L)$. Now, $\gamma$ needs to permute the symbols of $\alpha\pi_j$ to obtain the identity permutation (a reduced Latin square). By Lemma~\ref{lem1}(ii) $\alpha\pi_j\gamma^{-1}=1$ and thus $\gamma=\alpha\pi_j$.
\end{proof}
When $n$ is even, applying an isotopism preserves the parity of a Latin square (\cite{Drisko97}, Lemma 1), and thus the parity of all the Latin squares in an isotopy class is the same. When $n$ is odd an isotopy class contains an equal number of even and odd Latin squares. This is not the case when only reduced Latin squares are concerned. In order to determine the number of even and odd reduced Latin squares in an isotopy class the following definition will be useful:
\begin{definition}\label{def2:1}
The \emph{parity type} of a Latin square $L$ of order $n$ is defined to be $(k,m)$, for $0\leq k, m\leq n/2$, if $k$ (respectively $m$) of its rows (resp. columns) have one sign and the remaining $n-k$ rows (resp. $n-m$ columns) have the opposite sign. The \emph{parity class} $(k,m)$ consists of all Latin squares of parity type $(k,m)$.
\end{definition}
\begin{proposition}\label{prop2:5}
A parity class is the union of isotopy classes.
\end{proposition}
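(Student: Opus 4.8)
The plan is to show that the parity type is an isotopy invariant; once this is established, every isotopy class lies entirely inside a single parity class, so each parity class is exactly the union of the isotopy classes it contains. It suffices to analyze the three generators of an isotopism separately, namely a row permutation $(\alpha,1,1)$, a column permutation $(1,\beta,1)$, and a symbol permutation $(1,1,\gamma)$, since a general $\Theta=(\alpha,\beta,\gamma)$ is their composition and invariance of a quantity under each generator passes to their composition.

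First I would track what each generator does to the \emph{multisets} of signs of the rows and of the columns. Permuting the rows by $\alpha$ merely reorders the rows among themselves, so the multiset of row signs is unchanged; by Lemma~\ref{lem1}(i) every column $\pi$ is replaced by $\alpha\pi$, so every column sign is multiplied by the common factor $\Text{sgn}(\alpha)$. Symmetrically, permuting the columns by $\beta$ preserves the multiset of column signs and multiplies every row sign by $\Text{sgn}(\beta)$. Finally, by Lemma~\ref{lem1}(ii) permuting the symbols by $\gamma$ replaces every row and every column $\pi$ by $\pi\gamma^{-1}$, so it multiplies \emph{every} row sign and \emph{every} column sign by the common factor $\Text{sgn}(\gamma)$. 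Thus each generator either leaves the signs within the row-collection (resp.\ column-collection) untouched or flips all of them simultaneously.

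The key observation is then that the parity type records only the \emph{unordered} split of signs: writing $e_r,o_r$ for the numbers of even and odd rows (so $e_r+o_r=n$), the constraint $0\le k\le n/2$ forces $k=\min(e_r,o_r)$, and likewise $m=\min(e_c,o_c)$ for the columns. A simultaneous flip of all row signs interchanges $e_r$ and $o_r$ and hence leaves $k=\min(e_r,o_r)$ unchanged; the same holds for $m$ under a flip of all column signs, and a mere reordering within a collection obviously preserves these minima. Combining the three generators, no admissible operation can change $(k,m)$, so $L$ and $\Theta(L)$ share the same parity type, which is what we wanted.

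I expect the only delicate point to be the bookkeeping that distinguishes ``reordering'' from ``sign change'': one must be careful that permuting the rows does not alter the individual row permutations (it only changes their positions, hence preserves their signs as a multiset), while its genuine effect on signs is felt on the columns via Lemma~\ref{lem1}(i), and dually for column permutations. Once this separation is made cleanly, the reduction to the invariance of $\min(e_r,o_r)$ and $\min(e_c,o_c)$ under a global sign flip finishes the argument.
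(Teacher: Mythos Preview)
Your proof is correct and follows essentially the same approach as the paper's: the paper's proof is a single sentence asserting that an isotopism either flips the signs of all rows (resp.\ columns) simultaneously or leaves them all unchanged, whence the parity type is invariant. You have simply supplied the details behind that assertion by treating the three generators separately via Lemma~\ref{lem1} and making explicit that the parity type records $\min(e_r,o_r)$ and $\min(e_c,o_c)$, which survive a global sign flip.
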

\begin{proof}
Since an isotopy either changes the signs of all the rows (columns) or leaves them all unchanged, the parity type of two isotopic Latin squares is equal.
\end{proof}
Thus, it is valid to refer to the parity type of an isotopy class, as parity type of any of its members.
\begin{lemma}\label{lem2:2}
Suppose $n$ is odd. Let $L\in RLS(n)$ and let $\Theta=(\alpha, \alpha\pi_j\sigma_{\alpha^{-1}(1)}^{-1}, \alpha\pi_j)\in \I_{n,L}^\prime$, as in Proposition~\ref{prop2:1}, then
\begin{equation*}
\Text{par}(\Theta(L))=\Text{sgn}(\sigma_{\alpha^{-1}(1)})\Text{sgn}(\pi_j)\Text{par}(L).
\end{equation*}
\end{lemma}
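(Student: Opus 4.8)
The plan is to write the rows and columns of $\Theta(L)$ explicitly as permutations, take the product of their signs, and then let the oddness of $n$ do the work. Throughout, write $\beta=\alpha\pi_j\sigma_{\alpha^{-1}(1)}^{-1}$ and $\gamma=\alpha\pi_j$ for the second and third components of $\Theta$.

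First I would determine how each component of $\Theta$ acts on rows and columns, using Lemma~\ref{lem1}. Permuting the columns by $\beta$ replaces each row $\sigma_i$ by $\beta\sigma_i$ (Lemma~\ref{lem1}(i)) and only reorders the rows among themselves, which does not affect the product of their signs; permuting the rows by $\alpha$ replaces each column $\pi_j$ by $\alpha\pi_j$ (Lemma~\ref{lem1}(i)); and permuting the symbols by $\gamma$ replaces every row and every column $\pi$ by $\pi\gamma^{-1}$ (Lemma~\ref{lem1}(ii)). Since these operations affect rows and columns independently, the rows of $\Theta(L)$ form the multiset $\{\beta\sigma_i\gamma^{-1}:1\le i\le n\}$ and the columns form $\{\alpha\pi_j\gamma^{-1}:1\le j\le n\}$.

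Next I would compute the parity directly. By multiplicativity of $\Text{sgn}$ the computation yields
\begin{align*}
\Text{par}(\Theta(L)) &= \prod_{i=1}^n \Text{sgn}(\beta\sigma_i\gamma^{-1})\prod_{j=1}^n\Text{sgn}(\alpha\pi_j\gamma^{-1})\\
&= \Text{sgn}(\alpha)^n\,\Text{sgn}(\beta)^n\,\Text{sgn}(\gamma)^{-2n}\prod_{i=1}^n\Text{sgn}(\sigma_i)\prod_{j=1}^n\Text{sgn}(\pi_j),
\end{align*}
where the final product of signs is exactly $\Text{par}(L)$. Here the hypothesis that $n$ is odd is essential: the factor $\Text{sgn}(\gamma)^{-2n}$ equals $1$ since its exponent is even, while $\Text{sgn}(\alpha)^n=\Text{sgn}(\alpha)$ and $\Text{sgn}(\beta)^n=\Text{sgn}(\beta)$ since the exponent is odd. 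This collapses the expression to $\Text{par}(\Theta(L))=\Text{sgn}(\alpha)\Text{sgn}(\beta)\,\Text{par}(L)$.

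Finally I would substitute the value of $\beta$. From $\beta=\alpha\pi_j\sigma_{\alpha^{-1}(1)}^{-1}$ we get $\Text{sgn}(\beta)=\Text{sgn}(\alpha)\Text{sgn}(\pi_j)\Text{sgn}(\sigma_{\alpha^{-1}(1)})$, using that $\Text{sgn}$ takes values in $\{-1,1\}$, so $\Text{sgn}(\alpha)\Text{sgn}(\beta)=\Text{sgn}(\alpha)^2\Text{sgn}(\pi_j)\Text{sgn}(\sigma_{\alpha^{-1}(1)})=\Text{sgn}(\sigma_{\alpha^{-1}(1)})\Text{sgn}(\pi_j)$, which is the claimed formula. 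The only real care needed is in the bookkeeping of the first step: correctly attributing the left multiplications to the row/column permutations and the right multiplication to the symbol permutation via Lemma~\ref{lem1}. Once the multisets of rows and columns are pinned down, the remainder is routine, with the parity of $n$ providing the crucial simplification.
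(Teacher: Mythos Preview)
Your argument is correct and follows essentially the same line as the paper's proof: you derive the identity $\Text{par}(\Theta(L))=\Text{sgn}(\alpha)^n\Text{sgn}(\beta)^n\,\Text{par}(L)$ directly from Lemma~\ref{lem1}, whereas the paper simply quotes this as Proposition~3.1 of \cite{Janssen95}, and then both of you simplify using oddness of $n$ and the explicit form of $\beta$. One small exposition slip: in your first paragraph the clause ``only reorders the rows among themselves'' should read ``columns'' (it is the column permutation that merely reshuffles the multiset of columns), but the multisets you write down and the subsequent computation are correct.
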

\begin{proof}
By Proposition~3.1 in \cite{Janssen95}, $\Text{par}(\Theta(L))=(\Text{sgn}(\alpha))^n(\Text{sgn}(\alpha\pi_j\sigma_{\alpha^{-1}(1)}^{-1}))^n\Text{par}(L)$. Since $n$ is odd, $\Text{sgn}(\sigma_{\alpha^{-1}(1)}^{-1})=\Text{sgn}(\sigma_{\alpha^{-1}(1)})$ and $\Text{sgn}(\alpha)$ appears an even number of times, the result follows.
\end{proof}
The following theorem appears in a different form in \cite{StWan12} (Theorem 2.1). The proof here is different and was obtained independently.
\begin{theorem}\label{thm1}
Suppose $n$ is odd and the parity type of $L\in RLS(n)$ is $(k,m)$, then
\begin{equation}\label{even_squares}
\abs{\I_{n}^\prime(L)\cap RELS(n)}=\left\{\begin{array}{lll}
\frac{km+(n-k)(n-m)}{n^2}\abs{\I_{n}^\prime(L)} & \Text{if}& k\equiv m\; (\Text{mod}\; 2)\vspace{.1in}\\
\frac{k(n-m)+m(n-k)}{n^2}\abs{\I_{n}^\prime(L)} & \Text{if}& k\not\equiv m\; (\Text{mod}\; 2).
\end{array}\right.
\end{equation}
\end{theorem}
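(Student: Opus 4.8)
The plan is to count, among the $n\cdot n!$ isotopisms in $\I_{n,L}^\prime$ parametrized by Proposition~\ref{prop2:1}, exactly how many send $L$ to an \emph{even} reduced square, and then translate that count into the stated cardinality. The evaluation map $\Theta\mapsto\Theta(L)$ carries $\I_{n,L}^\prime$ onto $\I_n^\prime(L)$, and two isotopisms have the same image precisely when they differ by an element of $\A(L)$; thus its fibres are cosets of $\A(L)$, each reduced square in $\I_n^\prime(L)$ is hit exactly $\abs{\A(L)}$ times, and $\Text{par}(\Theta(L))$ is constant on each fibre. Hence, writing $P$ for the number of pairs $(\alpha,j)$ with $\alpha\in S_n$, $j\in[n]$ for which $\Theta=(\alpha,\alpha\pi_j\sigma_{\alpha^{-1}(1)}^{-1},\alpha\pi_j)$ gives an even square, we get $\abs{\I_n^\prime(L)\cap RELS(n)}=P/\abs{\A(L)}$. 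Combined with \eqref{eq1:2} this yields the clean ratio $\abs{\I_n^\prime(L)\cap RELS(n)}/\abs{\I_n^\prime(L)}=P/(n\cdot n!)$, so everything reduces to computing $P$.

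Next I would strip $P$ down to a count of sign patterns. By Lemma~\ref{lem2:2}, $\Text{par}(\Theta(L))=\Text{sgn}(\sigma_{\alpha^{-1}(1)})\Text{sgn}(\pi_j)\Text{par}(L)$ depends on $(\alpha,j)$ only through $r:=\alpha^{-1}(1)$ and $j$. For each fixed $r\in[n]$ there are exactly $(n-1)!$ permutations $\alpha$ with $\alpha^{-1}(1)=r$, while $j$ ranges freely over $[n]$; therefore $P=(n-1)!\,N$, where $N$ is the number of pairs $(r,j)\in[n]^2$ with $\Text{sgn}(\sigma_r)\Text{sgn}(\pi_j)\Text{par}(L)=1$. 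Substituting back gives $\abs{\I_n^\prime(L)\cap RELS(n)}/\abs{\I_n^\prime(L)}=N/n^2$, so it remains only to show that $N$ equals the numerator in \eqref{even_squares} appropriate to the parity of $k+m$.

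To evaluate $N$ without splitting into cases, I would compute the signed difference $D:=N-N'$, where $N'$ counts the pairs giving an odd square, through the factorization
\[
D=\sum_{r\in[n]}\sum_{j\in[n]}\Text{par}(\Theta(L))=\Text{par}(L)\Big(\sum_{r}\Text{sgn}(\sigma_r)\Big)\Big(\sum_{j}\Text{sgn}(\pi_j)\Big).
\]
Letting $a$ and $b$ be the numbers of rows and columns of sign $+1$, we have $\sum_r\Text{sgn}(\sigma_r)=2a-n$, $\sum_j\Text{sgn}(\pi_j)=2b-n$, and $\Text{par}(L)=(-1)^{(n-a)+(n-b)}=(-1)^{a+b}$. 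The definition of parity type forces $a\in\{k,n-k\}$ and $b\in\{m,n-m\}$, and this is exactly where oddness of $n$ is used: a one-line check of both values of $a$ shows $(-1)^{a}(2a-n)=-(-1)^{k}(n-2k)$ independently of which value $a$ takes, and likewise for $b$. Multiplying the two identities and absorbing $\Text{par}(L)$ gives the sign-unambiguous formula $D=(-1)^{k+m}(n-2k)(n-2m)$.

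Finally, since $N+N'=n^2$, I would solve $N=\tfrac12\big(n^2+(-1)^{k+m}(n-2k)(n-2m)\big)$ and expand: when $k\equiv m\pmod 2$ this is $km+(n-k)(n-m)$, and when $k\not\equiv m\pmod 2$ it is $k(n-m)+m(n-k)$, which upon dividing by $n^2$ reproduces \eqref{even_squares}. I expect the main obstacle to be precisely the bookkeeping in the third paragraph, since the parity type records only the \emph{minority} counts $k,m$, leaving undetermined both which sign is in the majority and the value of $\Text{par}(L)$; a direct attack would fork into four cases. The device of passing to the signed difference $D$ together with the identity $(-1)^{a}(2a-n)=-(-1)^{k}(n-2k)$ — valid exactly because $n$ is odd — is what collapses those cases into the single clean expression and drives the whole argument.
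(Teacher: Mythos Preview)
Your argument is correct and follows the same overall strategy as the paper: parametrize $\I_{n,L}^\prime$ by pairs $(\alpha,j)$ via Proposition~\ref{prop2:1}, use Lemma~\ref{lem2:2} to see that the parity of $\Theta(L)$ depends only on $r=\alpha^{-1}(1)$ and $j$, count the $(n-1)!$ permutations with a given value of $r$, and divide by $\abs{\A(L)}$ using \eqref{eq1:2}. The one place you diverge is in the evaluation of the count $N$. The paper fixes the case $k\equiv m\pmod 2$, further assumes $L$ is even in order to pin down which sign belongs to the minority rows and which to the minority columns, counts directly to get $km+(n-k)(n-m)$, and then declares the remaining cases analogous. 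Your signed-difference device, computing $D=N-N'$ as the factored sum $\Text{par}(L)\big(\sum_r\Text{sgn}(\sigma_r)\big)\big(\sum_j\Text{sgn}(\pi_j)\big)$ and invoking the identity $(-1)^{a}(2a-n)=-(-1)^{k}(n-2k)$ (valid for both $a\in\{k,n-k\}$ precisely because $n$ is odd), is a genuine streamlining: it collapses what is implicitly a four-way case split in the paper (parity of $k+m$ crossed with parity of $L$) into a single uniform computation, and delivers both lines of \eqref{even_squares} at once. The paper's approach is perhaps more concrete in any single case, but yours is cleaner globally and leaves nothing to the reader.
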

\begin{proof}
Suppose $k$ and $m$ are either both even or both odd (first row in (\ref{even_squares}). Since $n$ is odd, $n-k$ and $n-m$ are either both odd or both even respectively. Let $I=\{i_1,\ldots,i_k\}$ be the set of indices of the $k$ rows of $L$ having the same sign and let $J=\{j_1,\ldots,j_m\}$ be the set of indices of the $m$ columns of $L$ having the same sign. Suppose $L$ is even. Since $k$ and $m$ have the same parity, the rows indexed by $I$ and the columns indexed by $J$ must have the same sign and the rest of the rows and columns all have the opposite sign. Now, let $\Theta=(\alpha, \alpha\pi_j\sigma_{\alpha^{-1}(1)}^{-1}, \alpha\pi_j)\in \I_{n,L}^\prime$ (see Proposition~\ref{prop2:1}) be such that $\Theta(L)$ is even. By Lemma~\ref{lem2:2}, $\sigma_{\alpha^{-1}(1)}$ and $\pi_j$ must have the same sign. So, either $\alpha^{-1}(1)\in I$ and $j\in J$ or $\alpha^{-1}(1)\in [n]\setminus I$ and $j\in [n]\setminus J$. Since the number of permutations $\alpha$ such that $\alpha^{-1}(1)\in I$ (resp. $\alpha^{-1}(1)\not\in I$) is $(k/n)n!=k(n-1)!$ (resp. $(n-k)(n-1)!$), the number of isotopies $\Theta$ such that $\Theta(L)$ is even is $(km+(n-k)(n-m))(n-1)!$ and thus the number of distinct reduced even Latin squares that are isotopic to $L$ is $[km+(n-k)(n-m)](n-1)!/\abs{\A(L)}$. Hence,
\begin{equation*}
\begin{split}
\abs{\I_{n}^\prime(L)\cap RELS(n)}&=\frac{[km+(n-k)(n-m)](n-1)!}{\abs{\A(L)}}\\
&=\frac{[km+(n-k)(n-m)](n-1)!}{n\cdot n!}\frac{n\cdot n!}{\abs{\A(L)}}\\
&=\frac{[km+(n-k)(n-m)]}{n^2}\abs{\I_{n}^\prime(L)},
\end{split}
\end{equation*}
by (\ref{eq1:2}). The other case is proved in an analogous manner and is left to the reader.
\end{proof}
\begin{remark}
For $\abs{\I_{n}^\prime(L)\cap ROLS(n)}$ the cases in (\ref{even_squares}) are reversed since $km+(n-k)(n-m)+k(n-m)+m(n-k)=n^2$.
\end{remark}
\begin{remark}
Note that if $k=m=0$ all the squares in $\I_{n}^\prime(L)$ are even.
\end{remark}
\section{On even and odd Latin squares of odd order}\label{sec3}
The following extension to all $n$ of the Alon-Tarsi Latin square conjecture \cite{AlonTarsi92} appears in \cite{StWan12}:
\begin{conjecture}\label{conjKot}
For all $n$, $RELS(n)-ROLS(n)\neq0$.
\end{conjecture}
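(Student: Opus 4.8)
The plan is to separate the two parities of $n$, which behave very differently. For even $n$ the statement is, as noted in Section~\ref{sec1}, exactly the Alon--Tarsi conjecture \cite{AlonTarsi92}: since isotopism preserves parity in even order (\cite{Drisko97}, Lemma~1), passing from $ELS,OLS$ to the reduced sets $RELS,ROLS$ only rescales the two counts by a common factor, so $\abs{RELS(n)}\neq\abs{ROLS(n)}$ if and only if $\abs{ELS(n)}\neq\abs{OLS(n)}$. I therefore do not expect to settle even $n$ in general; the realistic target is the families already reachable by the polynomial/permanent method of Drisko \cite{Drisko97} and Glynn \cite{Glynn10}, namely $n=p\pm1$ for odd primes $p$, where a suitable coefficient is forced to be nonzero.

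For odd $n$ the machinery of Section~\ref{sec2} gives a workable handle, and this is where I would concentrate. Writing the global difference as a sum over isotopy classes $C$ with representative $L_C$ of parity type $(k_C,m_C)$, Theorem~\ref{thm1} together with the remark that the even and odd cases of (\ref{even_squares}) are interchanged yields, after the collapse
\[
\bigl[km+(n-k)(n-m)\bigr]-\bigl[k(n-m)+m(n-k)\bigr]=(n-2k)(n-2m),
\]
the per-class contribution $(-1)^{k_C+m_C}(n-2k_C)(n-2m_C)\abs{\I_n^\prime(L_C)}/n^2$. Hence the conjecture for odd $n$ is equivalent to the nonvanishing of
\[
\sum_{C}(-1)^{k_C+m_C}(n-2k_C)(n-2m_C)\,\abs{\I_n^\prime(L_C)},
\]
where by (\ref{eq1:2}) the weights are $\abs{\I_n^\prime(L_C)}=n\cdot n!/\abs{\A(L_C)}$.

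Since $k_C,m_C\le(n-1)/2$ for odd $n$, every factor $n-2k_C$ and $n-2m_C$ is a positive odd integer, so each term's sign is exactly $(-1)^{k_C+m_C}$ and the conjecture becomes the assertion that the total weight $(n-2k_C)(n-2m_C)\abs{\I_n^\prime(L_C)}$ carried by classes with $k_C+m_C$ even differs from that carried by classes with $k_C+m_C$ odd. The entire difficulty is this weighted imbalance. One line of attack is an involution on isotopy classes that flips the parity of $k_C+m_C$ while nearly preserving the weight, reducing matters to an uncancelled remainder such as its fixed points or the type-$(0,0)$ classes, which the remark after Theorem~\ref{thm1} shows consist entirely of even squares; another is to encode the sum as the value of a generating polynomial at $-1$ (or at a root of unity), mirroring the even-order permanent argument. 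The principal obstacle is that, although Theorem~\ref{thm1} pins down each class exactly, the joint distribution of the parity-type statistic $(k_C,m_C)$ and the autotopy-group size $\abs{\A(L_C)}$ over all isotopy classes of order $n$ is not understood well enough to force such an imbalance; and because the even-$n$ case already contains Alon--Tarsi, a single argument valid for all $n$ is almost certainly out of reach, so partial or asymptotic results are the realistic goal.
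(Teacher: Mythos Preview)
The statement you were handed is labelled a \emph{Conjecture} in the paper, and the paper does not prove it: it is presented as open (attributed to \cite{StWan12}), with the even-$n$ case identified with the Alon--Tarsi conjecture. There is therefore no ``paper's own proof'' to compare your attempt against.

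Your proposal is candid on this point: you do not claim a proof, only a plan, and you say explicitly that the even case ``already contains Alon--Tarsi'' and is out of reach. Your odd-$n$ reduction to the weighted signed sum
\[
\sum_{C}(-1)^{k_C+m_C}(n-2k_C)(n-2m_C)\,\abs{\I_n^\prime(L_C)}
\]
is correct and is precisely the content of the paper's Proposition following Corollary~\ref{cor3:2}: aggregating your sum over isotopy classes by parity type recovers equation~(\ref{eq:prop3}), since $N_n(k,m)=\sum_{C:(k_C,m_C)=(k,m)}\abs{\I_n^\prime(L_C)}$. Beyond that equivalent reformulation the paper offers only the conditional asymptotic Proposition~\ref{prop4:3} (that, assuming Conjecture~\ref{conj3}, the \emph{relative} difference tends to zero), which points in the opposite direction from a nonvanishing proof.

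So your derivation is sound as far as it goes and matches the paper's, but neither you nor the paper supplies a proof; the speculative strategies you sketch (a sign-flipping involution on isotopy classes with a controlled fixed-point set, or a generating-polynomial evaluation at $-1$) are not pursued in the paper and remain genuinely open. The honest gap is simply that Conjecture~\ref{conjKot} is unproved, and nothing in your proposal closes it.
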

When $n$ is even Conjecture~\ref{conjKot} is equivalent to the Alon-Tarsi conjecture. When $n$ is odd it is not known whether Conjecture~\ref{conjKot} is equivalent to another extension of the Alon-Tarsi conjecture by Zappa \cite{Zappa97}.
\\
Let $N_n(k,m)$ be the number of reduced Latin squares of order $n$ in the parity class $(k,m)$. We have the following corollary of Theorem~\ref{thm1}:
\begin{corollary}\label{cor3:2}
Suppose $n$ is odd. The parity class $(k,m)$ contains $\frac{km+(n-k)(n-m)}{n^2}N_n(k,m)$ even reduced Latin squares if $k\equiv m\; (\Text{mod}\; 2)$ and $\frac{k(n-m)+m(n-k)}{n^2}N_n(k,m)$ even reduced Latin squares if $k\not\equiv m\; (\Text{mod}\; 2)$
\end{corollary}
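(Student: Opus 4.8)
The plan is to deduce this directly from Theorem~\ref{thm1} by decomposing the parity class into isotopy classes and summing the per-class counts. First I would invoke Proposition~\ref{prop2:5}, which tells us that the parity class $(k,m)$ is a disjoint union of full isotopy classes. Choosing representatives $L_1,\ldots,L_t\in RLS(n)$, one for each isotopy class meeting the parity class, the class decomposes as $\bigsqcup_{i=1}^{t}\I_n(L_i)$. Restricting to reduced squares, the reduced members of the parity class are exactly $\bigsqcup_{i=1}^{t}\I_n^\prime(L_i)$, so that $N_n(k,m)=\sum_{i=1}^{t}\abs{\I_n^\prime(L_i)}$, and likewise the even reduced members split as $\bigsqcup_{i=1}^{t}(\I_n^\prime(L_i)\cap RELS(n))$.

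Next I would apply Theorem~\ref{thm1} term by term. By Proposition~\ref{prop2:5} every $L_i$ shares the parity type $(k,m)$, so Theorem~\ref{thm1} applies to each $L_i$ with one and the same coefficient — call it $c(k,m)$, equal to $\frac{km+(n-k)(n-m)}{n^2}$ when $k\equiv m\;(\Text{mod}\;2)$ and $\frac{k(n-m)+m(n-k)}{n^2}$ otherwise. Summing over $i$ and pulling the constant out of the sum gives
\[
\sum_{i=1}^{t}\abs{\I_n^\prime(L_i)\cap RELS(n)}=c(k,m)\sum_{i=1}^{t}\abs{\I_n^\prime(L_i)}=c(k,m)\,N_n(k,m),
\]
which is precisely the claimed number of even reduced Latin squares in the parity class.

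The one point that actually makes the argument work — and the only thing I would be careful to flag — is that the coefficient furnished by Theorem~\ref{thm1} depends solely on the parity type $(k,m)$ (through $k$, $m$, and the parity of $k-m$) and not on any finer invariant of the individual isotopy class. That is exactly what licenses factoring $c(k,m)$ out of the sum. Since no such finer dependence is present, there is no real obstacle here, and the corollary follows at once from the summation above.
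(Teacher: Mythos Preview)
Your argument is correct and follows exactly the paper's approach: decompose the parity class into isotopy classes via Proposition~\ref{prop2:5}, apply Theorem~\ref{thm1} to each, and sum, using that the coefficient depends only on $(k,m)$. The paper states this in a single sentence, but your more explicit version fills in precisely the same steps.
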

\begin{proof}
Since a parity class is the union of isotopy classes (Proposition~\ref{prop2:5}), the result follows from Theorem~\ref{thm1} by summing over all the isotopy classes contained in a given parity class.
\end{proof}
Corollary~\ref{cor3:2} yields an equivalent version of Conjecture~\ref{conjKot} for odd $n$:
\begin{proposition}
If $n$ is odd Conjecture~\ref{conjKot} is equivalent to
\begin{equation}\label{eq:prop3}
\sum_{k,m<n/2}(-1)^{k+m}(n-2k)(n-2m)N_n(k,m)\neq0.
\end{equation}
\end{proposition}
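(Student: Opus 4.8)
The plan is to partition the set of reduced Latin squares of order $n$ by parity class and, within each class, to compute the excess of even squares over odd ones. Every reduced Latin square lies in exactly one parity class $(k,m)$ with $0\le k,m\le n/2$; since $n$ is odd, the integers in this range are exactly those with $k,m<n/2$, so summing over parity classes gives
\begin{equation*}
\abs{RELS(n)}-\abs{ROLS(n)}=\sum_{k,m<n/2}\bigl(E_n(k,m)-O_n(k,m)\bigr),
\end{equation*}
where $E_n(k,m)$ and $O_n(k,m)$ denote the numbers of even and odd reduced Latin squares in the parity class $(k,m)$.

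First I would read off $E_n(k,m)$ directly from Corollary~\ref{cor3:2}, and obtain $O_n(k,m)=N_n(k,m)-E_n(k,m)$; by the Remark following Theorem~\ref{thm1}, $O_n(k,m)$ is given by the same two expressions with the two cases interchanged. Hence, when $k\equiv m\pmod 2$ the difference $E_n(k,m)-O_n(k,m)$ equals $\tfrac{1}{n^2}\bigl([km+(n-k)(n-m)]-[k(n-m)+m(n-k)]\bigr)N_n(k,m)$, and when $k\not\equiv m\pmod 2$ it equals the negative of this quantity.

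The key computation is the elementary factorization
\begin{equation*}
[km+(n-k)(n-m)]-[k(n-m)+m(n-k)]=(n-2k)(n-2m),
\end{equation*}
which I would verify by expanding both sides. Combining this with the sign bookkeeping — observing that $k\equiv m\pmod 2$ is exactly the condition $(-1)^{k+m}=1$ — collapses the two cases into the single uniform formula
\begin{equation*}
E_n(k,m)-O_n(k,m)=\frac{(-1)^{k+m}}{n^2}(n-2k)(n-2m)N_n(k,m).
\end{equation*}
Summing over all parity classes then produces
\begin{equation*}
\abs{RELS(n)}-\abs{ROLS(n)}=\frac{1}{n^2}\sum_{k,m<n/2}(-1)^{k+m}(n-2k)(n-2m)N_n(k,m),
\end{equation*}
and since the factor $1/n^2$ is nonzero, the left-hand side vanishes if and only if the sum does. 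This is precisely the equivalence of Conjecture~\ref{conjKot} with~\eqref{eq:prop3}.

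I do not expect a serious obstacle, as the substance is already carried by Corollary~\ref{cor3:2}. What remains is the factorization identity together with the observation that the two case-dependent signs are uniformly encoded by $(-1)^{k+m}$. The only point requiring mild care is confirming that the index range $k,m<n/2$ accounts for every reduced Latin square exactly once, which holds because $n$ is odd and hence $n/2$ is not itself an attainable value of $k$ or $m$.
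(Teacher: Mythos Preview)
Your proposal is correct and follows essentially the same approach as the paper: both arguments apply Corollary~\ref{cor3:2} to each parity class, compute the difference of even and odd counts via the factorization $[km+(n-k)(n-m)]-[k(n-m)+m(n-k)]=(n-2k)(n-2m)$, absorb the two cases into the sign $(-1)^{k+m}$, and sum to obtain the identity $\abs{RELS(n)}-\abs{ROLS(n)}=\tfrac{1}{n^2}\sum_{k,m<n/2}(-1)^{k+m}(n-2k)(n-2m)N_n(k,m)$. If anything, your write-up is slightly more careful than the paper's in explicitly noting both directions of the equivalence and justifying the index range.
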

\begin{proof}
By Corollary~\ref{cor3:2}, if $k\equiv m\; (\Text{mod}\; 2)$ the difference between the number of even reduced Latin squares and the number of odd reduced Latin squares in the parity class $(k,m)$ is
\begin{equation*}
\left(\frac{km+(n-k)(n-m)}{n^2}-\frac{k(n-m)+m(n-k)}{n^2}\right)N_n(k,m)=\frac{(n-2k)(n-2m)}{n^2}N_n(k,m)
\end{equation*}
and when $k\not\equiv m\; (\Text{mod}\; 2)$, this difference is $-\frac{(n-2k)(n-2m)}{n^2}N_n(k,m)$. Summing over all parity classes,
\begin{equation}\label{sum:classes}
RELS(n)-ROLS(n)=\frac{1}{n^2}\sum_{k,m<n/2}(-1)^{k+m}(n-2k)(n-2m)N_n(k,m)
\end{equation}
Assuming Conjecture~\ref{conjKot} statement (\ref{eq:prop3}) follows.
\end{proof}

This calls for a study of the values $N_n(k,m)$. When $n=5$ only the parity classes $(0,0)$ and $(1,1)$ are nonempty. When $n=6$ the only empty classes are $(0,2)$ and $(2,0)$. When $n=7$ all parity classes are nonempty, as is the case for $n=8,9,10$ and 11 (Tables \ref{tab1}-\ref{tab5}). Thus, it seems reasonable to conjecture that for all $n\geq7$ and all $k,m\leq n/2$ the parity class $(k,m)$ is nonempty. Furthermore, an estimate for the values $N_n(k,m)$ is desired. It is conjectured in \cite{Cav08} that the distribution of the permutation that maps between two randomly chosen rows of a randomly chosen Latin square of order $n$, among all derangements, approaches uniformity as $n\rightarrow\infty$ (a derangement is a permutation without fixed points). Based on this, it seems reasonable to conjecture that for large enough $n$ the distribution of the parities of the rows and columns of a random Latin square will be close to the random distribution. This should also hold when restricting to reduced Latin squares, namely, if the parities of the rows and columns were chosen randomly, the proportion of reduced Latin squares of parity class $(k,m)$ among all reduced Latin squares would be $rs\binom{n}{k}\binom{n}{m}/2^{2n}$, where $r=2$ (resp. $s=2$) if $k<n/2$ (resp. $m<n/2$) and 1 otherwise. (since $k,m\leq n/2$). Thus, $rs$ is always 4 if $n$ is odd.
\begin{conjecture}\label{conj3}
If $n$ is odd, then for all $k,m<n/2$
\begin{equation}\label{conj3:1}
N_n(k,m) \sim\frac{\binom{n}{k}\binom{n}{m}}{2^{2(n-1)}}|RLS(n)|,
\end{equation}
and if $k$ is even, then for all $k,m\leq n/2$
\begin{equation}\label{conj3:2}
N_n(k,m) \sim\frac{rs\binom{n}{k}\binom{n}{m}}{2^{2n}}|RLS(n)|
\end{equation}
where $r=1$ (resp. $s=1$) if $k=n/2$ (resp. $m=n/2$) and $2$ otherwise.
\end{conjecture}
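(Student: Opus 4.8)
Since Conjecture~\ref{conj3} is an open conjecture, any ``proof'' can only reduce it to a clean probabilistic principle and isolate the genuinely open input. The plan is to fix $n$, draw $L$ uniformly from $RLS(n)$, and let $X_i\in\{0,1\}$ (resp. $Y_j\in\{0,1\}$) indicate that the $i$th row $\sigma_i$ (resp. $j$th column $\pi_j$) of $L$ is even. Since $L$ is reduced we have $\sigma_1=\pi_1=1$, so $X_1=Y_1=1$ deterministically. Writing $R=\sum_i X_i$ and $C=\sum_j Y_j$ for the numbers of even rows and columns, the parity type of $L$ is $(\min(R,n-R),\min(C,n-C))$, and hence
\begin{equation*}
\frac{N_n(k,m)}{|RLS(n)|}=\Pr\bigl[\min(R,n-R)=k,\ \min(C,n-C)=m\bigr].
\end{equation*}
Note that Theorem~\ref{thm1} and Corollary~\ref{cor3:2} only split each parity class into its even and odd parts; they say nothing about the \emph{size} $N_n(k,m)$ of the class, so the conjecture is a genuinely new statement about the joint law of $(R,C)$.

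The first step is to record the elementary identity that makes the stated constants the ``independent fair coins'' values. If the free signs $X_2,\dots,X_n$ were i.i.d.\ uniform on $\{0,1\}$, then $R=1+\mathrm{Bin}(n-1,\tfrac12)$ and, for $k<n/2$, Pascal's rule gives
\begin{equation*}
\Pr[\min(R,n-R)=k]=\frac{\binom{n-1}{k-1}+\binom{n-1}{k}}{2^{n-1}}=\frac{\binom{n}{k}}{2^{n-1}},
\end{equation*}
so the forced first row is \emph{exactly} absorbed into the binomial coefficient $\binom{n}{k}$. Assuming in addition that the row field $(X_i)$ and the column field $(Y_j)$ are asymptotically independent, the joint point probability factorises into $\binom{n}{k}\binom{n}{m}/2^{2(n-1)}$, which is \eqref{conj3:1}; the even-order formula \eqref{conj3:2}, with the correction factors $r,s$, is the same computation in which the endpoint values $k=n/2$ or $m=n/2$ are no longer doubled. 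Thus Conjecture~\ref{conj3} is equivalent to the assertion that the $2(n-1)$ free signs behave asymptotically like independent fair coins, with convergence strong enough to control individual point probabilities.

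For the marginal fairness of a single $X_i$, observe that because $\sigma_1$ is the identity, row $i$ differs from the first row in every column and therefore represents a derangement; more generally the permutation separating any two rows is a derangement. Cavenagh's conjecture \cite{Cav08} predicts that this separating permutation is asymptotically uniform over all derangements, and the classical identity $D_n^{\mathrm{ev}}-D_n^{\mathrm{od}}=(-1)^{n-1}(n-1)=o(D_n)$ shows that even and odd derangements are asymptotically equinumerous; together these give $\Pr[X_i=1]\to\tfrac12$. Applying the argument to the transpose of $L$ yields the same for each $Y_j$.

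The real difficulty---and the reason the statement is left as a conjecture---is the passage from marginal (or pairwise) fairness to \emph{joint} independence of all $2(n-1)$ signs, with enough precision to read off one point of the distribution. I expect this to be the main obstacle for two reasons. First, for fixed small $k,m$ the target probability $\binom{n}{k}\binom{n}{m}/2^{2(n-1)}$ is exponentially small in $n$, so a central-limit or total-variation estimate is far too crude: one needs local-limit/product-form control of $\Pr[R=r,\,C=c]$ uniformly over the whole range $0\le r,c\le n$, a far stronger decorrelation than the single-pair uniformity of \cite{Cav08}. Second, the row and column sign fields are coupled, both through the global invariant $\Text{par}(L)=\prod_i\Text{sgn}(\sigma_i)\prod_j\Text{sgn}(\pi_j)$ and through the shared cell constraints, so their asymptotic independence must itself be proved rather than assumed. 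I would attack this either through a several-row refinement of Cavenagh's conjecture, or through a switching argument showing that intercalate-type trades act on the sign vector so as to mix it toward the uniform product measure; quantifying that mixing finely enough to pin down exponentially small point probabilities is exactly where I expect the proof to stall.
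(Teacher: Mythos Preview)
The statement is a \emph{conjecture}, and the paper offers no proof of it: the surrounding text only motivates the constants heuristically via the derangement-uniformity conjecture of \cite{Cav08}, presents the numerical evidence in Tables~\ref{tab1}--\ref{tab5}, and cites Lemma~2.4 of \cite{StWan12} as a partial result. There is therefore no argument in the paper against which your proposal can be compared.

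Given that, your write-up is entirely appropriate. You correctly flag the statement as open, recast it as a question about the joint law of the sign vectors $(X_i)$ and $(Y_j)$, and verify that the stated constants are precisely the independent-fair-coin values; your Pascal-rule computation showing that the forced $X_1=1$ is absorbed exactly into $\binom{n}{k}$ is correct and is more explicit than anything in the paper. Your identification of the genuine obstacle---upgrading single-pair uniformity to joint local-limit control of all $2(n-1)$ signs, at the level of exponentially small point probabilities---is also accurate and goes beyond the paper's discussion. One small quibble: calling $\Text{par}(L)=\prod_i\Text{sgn}(\sigma_i)\prod_j\Text{sgn}(\pi_j)$ a ``global invariant'' that couples the row and column fields is slightly misleading, since $\Text{par}(L)$ is not fixed over $RLS(n)$ but is itself a function of $L$; the genuine coupling between the two sign fields comes from the shared-cell Latin constraints you mention next, not from this product.
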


\begin{table}[htb]
\centering 
\begin{tabular}{c c c c}
\hline\hline 
$(k,m)$ & $N_7(k,m)$ & Expected $N_7(k,m)$ &$\frac{N_7(k,m)}{\Text{Expected}\;N_7(k,m)}$\\ [1.0ex] 
\hline
$(0,0)$ & 3270 & 4136 & 0.79\\
$(1,0)+(0,1)$ & 47040 & 57904 & 0.81\\
$(2,0)+(0,2)$ & 246960 & 173720 & 1.42\\
$(3,0)+(0,3)$ & 208740 & 289536 & 0.72\\
$(1,1)$ & 345450 & 202676 & 1.70\\
$(2,1)+(1,2)$ & 979020 & 1216056 & 0.81\\
$(3,1)+(1,3)$ & 2493120 & 2026760 & 1.23\\
$(2,2)$ & 1803690 & 1824084 & 0.99\\
$(3,2)+(2,3)$ & 5538960 & 6080280 & 0.91\\
$(3,3)$ & 5275830 & 5066904 & 1.04\\ [1ex] 
\hline 
\end{tabular}
\caption{Real and expected values of $N_7(k,m)$.} 
\label{tab1}
\end{table}

\begin{table}
\centering 
\begin{tabular}{c c c c}
\hline\hline 
$(k,m)$ & $N_8(k,m)$ & Expected $N_8(k,m)$ &$\frac{N_8(k,m)}{\Text{Expected}\;N_8(k,m)}$\\ [1.0ex] 
\hline
$(0,0)$ & 11 & 8 & 1.38\\
$(0,1)+(1,0)$ & 94 & 96 & 0.98\\
$(0,2)+(2,0)$ & 344 & 344 & 1.00\\
$(0,4)+(4,0)$ & 421 & 428 & 0.98\\
$(0,3)+(3,0)$ & 705 & 680 & 1.04\\
$(1,1)$ & 394 & 392 & 1.01\\
$(1,2)+(2,1)$ & 2604 & 2736 & 0.95\\
$(1,4)+(4,1)$ & 3430 & 3416 & 1.00\\
$(1,3)+(3,1)$ & 5258 & 5472 & 0.96\\
$(2,2)$ & 5085 & 4784 & 1.06\\
$(2,4)+(4,2)$ & 12123 & 11964 & 1.01\\
$(4,4)$ & 7819 & 7477 & 1.05\\
$(2,3)+(3,2)$ & 19000 & 19144 & 0.99\\
$(3,4)+(4,3)$ & 23697 & 23924 & 0.99\\
$(3,3)$ & 17015 & 19140 & 0.99\\ [1ex] 
\hline 
\end{tabular}
\caption{Observed and expected values of $N_8(k,m)$ for a sample of 100,000 randomly chosen reduced Latin squares.}
\label{tab2}
\end{table}

Conjecture~\ref{conj3} is supported by the data in the following tables. Table~\ref{tab1} lists the real values of $N_7(k,m)$, along with the expected values given by (\ref{conj3:1}). The fourth column shows the ratios between the corresponding real and expected values. Tables~\ref{tab2}-\ref{tab5} list the counts of $N_n(k,m)$, as calculated for 100,000 randomly selected reduced Latin squares (using the Jacobson-Matthews method \cite{JM96}), for each of $n=8,9,10$ and 11, along with the expected values calculated using (\ref{conj3:1}) and (\ref{conj3:2}) for this sample size (similar simulations were performed for $n=12,\ldots,20$ and close values were observed as well for estimated and observed counts). The counts for the classes $(k,m)$ and $(m,k)$, when $k\neq m$, were added, since the corresponding classes have equal size. Note that 0 as an estimated value does not mean that the class is expected to be empty, but that for the given sample size no member of the class is expected to be observed.

\begin{table}[!b]
\centering 
\begin{tabular}{c c c c}
\hline\hline 
$(k,m)$ & $N_9(k,m)$ & Expected $N_9(k,m)$ &$\frac{N_9(k,m)}{\Text{Expected}\;N_9(k,m)}$\\ [1.0ex]
\hline
$(0,0)$ & 2 & 0 & -\\
$(0,1)+(1,0)$ & 20 & 24 & 0.83\\
$(0,2)+(2,0)$ & 124 & 112 & 1.11\\
$(1,1)$ & 125 & 124 & 1.01\\
$(0,3)+(3,0)$ & 264 & 256 & 1.03\\
$(0,4)+(4,0)$ & 340 & 384 & 0.89\\
$(1,2)+(2,1)$ & 1031 & 992 & 1.04\\
$(1,3)+(3,1)$ & 2312 & 2304 & 1.00\\
$(1,4)+(4,1)$ & 3479 & 3464 & 1.00\\
$(2,2)$ & 1997 & 1976 & 1.01\\
$(2,3)+(3,2)$ & 9182 & 9232 & 0.99\\
$(2,4)+(4,2)$ & 13704 & 13840 & 0.99\\
$(3,3)$ & 10799 & 10768 & 1.00\\
$(3,4)+(4,3)$ & 32468 & 32296 & 1.01\\
$(4,4)$ & 24153 & 24224 & 1.00\\[1ex] 
\hline 
\end{tabular}
\caption{Observed and expected values of $N_9(k,m)$ for a sample of 100,000 randomly chosen reduced Latin squares.}
\label{tab3}
\end{table}

\begin{table}[htb]
\centering 
\begin{tabular}{c c c c}
\hline\hline 
$(k,m)$ & $N_{10}(k,m)$ & Expected $N_{10}(k,m)$ &$\frac{N_{10}(k,m)}{\Text{Expected}\;N_{10}(k,m)}$\\ [1.0ex]
\hline
$(0,0)$ & 2 & 0 & -\\
$(0,1)+(1,0)$ & 8 & 8 & 1.00\\
$(0,2)+(2,0)$ & 32 & 32 & 1.00\\
$(1,1)$ & 40 & 40 & 1.00\\
$(0,3)+(3,0)$ & 86 & 88 & 0.98\\
$(0,5)+(5,0)$ & 107 & 96 & 1.11\\
$(0,4)+(4,0)$ & 165 & 160 & 1.03\\
$(1,2)+(2,1)$ & 335 & 344 & 0.97\\
$(1,3)+(3,1)$ & 881 & 912 & 0.97\\
$(1,5)+(5,1)$ & 939 & 960 & 0.98\\
$(2,2)$ & 769 & 772 & 1.00\\
$(1,4)+(4,1)$ & 1652 & 1600 & 1.03\\
$(2,3)+(3,2)$ & 4060 & 4120 & 0.99\\
$(2,5)+(5,2)$ & 4181 & 4324 & 0.97\\
$(2,4)+(4,2)$ & 7326 & 7208 & 1.02\\
$(3,3)$ & 5400 & 5492 & 0.98\\
$(3,5)+(5,3)$ & 11461 & 11536 & 0.99\\
$(5,5)$ & 6083 & 6056 & 1.00\\
$(3,4)+(4,3)$ & 19208 & 19224 & 1.00\\
$(4,5)+(5,4)$ & 20286 & 20188 & 1.00\\
$(4,4)$ & 16979 & 16824 & 1.01\\[1ex] 
\hline 
\end{tabular}
\caption{Observed and expected values of $N_{10}(k,m)$ for a sample of 100,000 randomly chosen reduced Latin squares.}
\label{tab4}
\end{table}


\begin{table}[htb]
\centering 
\begin{tabular}{c c c c}
\hline\hline 
$(k,m)$ & $N_{11}(k,m)$ & Expected $N_{11}(k,m)$ &$\frac{N_{11}(k,m)}{\Text{Expected}\;N_{11}(k,m)}$\\ [1.0ex]
\hline
$(0,0)$ & 0 & 0 & -\\
$(0,1)+(1,0)$ & 3 & 0 & -\\
$(0,2)+(2,0)$ & 13 & 8 & 1.63\\
$(0,3)+(3,0)$ & 40 & 32 & 1.25\\
$(1,1)$ & 11 & 12 & 0.92\\
$(0,4)+(4,0)$ & 59 & 64 & 0.92\\
$(0,5)+(5,0)$ & 90 & 88 & 1.02\\
$(1,2)+(2,1)$ & 120 & 112 & 1.07\\
$(1,3)+(3,1)$ & 331 & 344 & 0.96\\
$(2,2)$ & 293 & 288 & 1.02\\
$(1,4)+(4,1)$ & 691 & 696 & 0.99\\
$(1,5)+(5,1)$ & 985 & 968 & 1.02\\
$(2,3)+(3,2)$ & 1699 & 1728 & 0.98\\
$(2,4)+(4,2)$ & 3312 & 3464 & 0.96\\
$(2,5)+(5,2)$ & 4881 & 4848 & 1.01\\
$(3,3)$ & 2555 & 2596 & 0.98\\
$(3,4)+(4,3)$ & 10209 & 10385 & 0.98\\
$(3,5)+(5,3)$ & 14446 & 14536 & 0.99\\
$(4,4)$ & 10472 & 10384 & 1.01\\
$(4,5)+(5,4)$ & 29244 & 29080 & 1.01\\
$(5,5)$ & 20546 & 20356 & 1.01\\[1ex] 
\hline 
\end{tabular}
\caption{Observed and expected values of $N_{11}(k,m)$ for a sample of 100,000 randomly chosen reduced Latin squares.}
\label{tab5}
\end{table}
Lemma 2.4 in \cite{StWan12} states that almost all Latin squares belong to parity classes $(k,m)$ such that $k,m\geq n/63$, and thus it can be viewed as a partial result for Conjecture~\ref{conj3}.
\begin{proposition}\label{prop4:3}
Assuming Conjecture~\ref{conj3} holds then
\begin{equation}\label{prop4:3:0}
\lim_{\substack{n\;\Text{odd}\\n\rightarrow\infty}}\frac{|RELS(n)|-|ROLS(n)|}{|RLS(n)|}=0
\end{equation}
\end{proposition}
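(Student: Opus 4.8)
The plan is to begin from the exact identity~(\ref{sum:classes}), divide through by $\abs{RLS(n)}$, and substitute the asymptotic estimate~(\ref{conj3:1}) of Conjecture~\ref{conj3}. Writing
\[
N_n(k,m)=\frac{\binom{n}{k}\binom{n}{m}}{2^{2(n-1)}}\abs{RLS(n)}+E_{k,m},
\]
where Conjecture~\ref{conj3} makes $E_{k,m}$ negligible against the displayed main term, I would split the sum in~(\ref{sum:classes}) into a main part and an error part. The key structural observation is that the main part factors: since $(-1)^{k+m}$, $(n-2k)(n-2m)$, and $\binom nk\binom nm$ each split as a $k$-factor times an $m$-factor, the double sum over $k,m<n/2$ equals $S^2$, where $S=\sum_{k<n/2}(-1)^k(n-2k)\binom nk$. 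Thus the normalized main part is exactly $S^2/(n^2\,2^{2(n-1)})$.

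The heart of the argument is to show that $S=0$ when $n$ is odd. I would argue by symmetry. First, the full alternating sum $\sum_{k=0}^n(-1)^k(n-2k)\binom nk$ vanishes for $n\ge 2$, because $\sum_k(-1)^k\binom nk=0$ and $\sum_k(-1)^k k\binom nk=0$ (the latter via $k\binom nk=n\binom{n-1}{k-1}$). Second, for odd $n$ the summand $g(k)=(-1)^k(n-2k)\binom nk$ is symmetric about $n/2$: one checks $g(n-k)=g(k)$, since $(-1)^{n-k}=-(-1)^k$ cancels the sign change in $n-2(n-k)=-(n-2k)$. Because $n$ is odd there is no middle term, so the lower half $\{k<n/2\}$ and the upper half $\{k>n/2\}$ contribute equally; as their total is $0$, each half vanishes and $S=0$. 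Hence the entire main part is $0$.

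It then remains to show that the error part
\[
\frac{1}{n^2\abs{RLS(n)}}\sum_{k,m<n/2}(-1)^{k+m}(n-2k)(n-2m)E_{k,m}
\]
tends to $0$. Taking absolute values and using that $n-2k,n-2m>0$ throughout the range, I would bound it by $\frac{1}{n^2\abs{RLS(n)}}\sum_{k,m<n/2}(n-2k)(n-2m)\abs{E_{k,m}}$. Reading~(\ref{conj3:1}) uniformly as $\abs{E_{k,m}}\le\eps_n\,\frac{\binom nk\binom nm}{2^{2(n-1)}}\abs{RLS(n)}$ with $\eps_n\to0$, this is at most $\frac{\eps_n}{n^2\,2^{2(n-1)}}\bigl(\sum_{k<n/2}(n-2k)\binom nk\bigr)^2$. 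Since $n-2k\le n$ and $\sum_{k<n/2}\binom nk=2^{n-1}$ for odd $n$, the inner sum is at most $n\,2^{n-1}$, so the error is at most $\eps_n\to0$, which yields~(\ref{prop4:3:0}).

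The step I expect to be the main obstacle is this last one: once absolute values are taken the sign cancellation that killed the main term is gone, so everything rests on the convergence in Conjecture~\ref{conj3} being \emph{uniform} in $k,m$. The terms with $k,m\approx n/2$ carry almost all of the binomial mass, and without a uniform estimate (or at least a uniform upper bound $N_n(k,m)=O\bigl(\binom nk\binom nm\,2^{-2n}\abs{RLS(n)}\bigr)$) one could not rule out that these dominant terms keep the error from vanishing. So the care needed is entirely in how strongly one interprets the asymptotic in Conjecture~\ref{conj3}.
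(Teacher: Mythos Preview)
Your argument follows the same overall route as the paper: start from~(\ref{sum:classes}), substitute~(\ref{conj3:1}), factor the double sum as $S^2$ with $S=\sum_{k<n/2}(-1)^k(n-2k)\binom{n}{k}$, and show $S=0$ for odd $n$. The difference lies in how $S=0$ is established. The paper splits $S=n\sum_{k\le\lfloor n/2\rfloor}(-1)^k\binom{n}{k}-2\sum_{k\le\lfloor n/2\rfloor}(-1)^k k\binom{n}{k}$ and appeals to two closed-form identities (proved by induction),
\[
\sum_{k=0}^{\lfloor n/2\rfloor}(-1)^{k}\binom{n}{k}=2(-1)^{\frac{n-1}{2}}\binom{n-2}{\frac{n-1}{2}},\qquad
\sum_{k=0}^{\lfloor n/2\rfloor}(-1)^{k}k\binom{n}{k}=(-1)^{\frac{n-1}{2}}n\binom{n-2}{\frac{n-1}{2}},
\]
which visibly cancel. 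Your symmetry argument ($g(k)=g(n-k)$ for odd $n$, combined with the vanishing of the full sum) is shorter and avoids evaluating either partial sum explicitly; it buys simplicity but yields less information than the paper's identities. You are also more explicit than the paper about the error term: the paper simply writes ``$\approx$'' after the substitution and proceeds, whereas you correctly isolate that the passage to the limit relies on reading~(\ref{conj3:1}) uniformly in $k,m$. That observation is a genuine clarification rather than a departure.
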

\begin{proof}
Combining (\ref{sum:classes}) and Conjecture~\ref{conj3} we have that for large odd $n$
\begin{equation*}
\frac{|RELS(n)|-|ROLS(n)|}{|RLS(n)|}\approx\frac{1}{n^22^{2(n-1)}}\sum_{k,m=0}^{\lfloor n/2\rfloor}(-1)^{k+m}(n-2k)(n-2m)\binom{n}{k}\binom{n}{m}
\end{equation*}
Now,
\begin{equation}\label{prop4:3:2}
\begin{split}
\sum_{k,m=0}^{\lfloor n/2\rfloor}(-1)^{k+m}&(n-2k)(n-2m)\binom{n}{k}\binom{n}{m}\\
&=\left(\sum_{k=0}^{\lfloor n/2\rfloor}(-1)^{k}(n-2k)\binom{n}{k}\right)^2\\
&=\left(n\sum_{k=0}^{\lfloor n/2\rfloor}(-1)^{k}\binom{n}{k}
-2\sum_{k=0}^{\lfloor n/2\rfloor}(-1)^{k}k\binom{n}{k}\right)^2.
\end{split}
\end{equation}
It can be shown by induction on $n$ that for odd $n\geq3$
\begin{equation*}
\sum_{k=0}^{\lfloor n/2\rfloor}(-1)^{k}k\binom{n}{k}=(-1)^{\frac{n-1}{2}}n\binom{n-2}{\frac{n-1}{2}}
\end{equation*}
and
\begin{equation*}
\sum_{k=0}^{\lfloor n/2\rfloor}(-1)^{k}\binom{n}{k}=2(-1)^{\frac{n-1}{2}}\binom{n-2}{\frac{n-1}{2}}.
\end{equation*}
Thus, the sum in (\ref{prop4:3:2}) is 0 and the result follows.
\end{proof}
The conjecture presented in Proposition~\ref{prop4:3} is supported by Theorem 3.2 in \cite{StWan12} which states that $|RELS(n)|$ and $|ROLS(n)|$ share a large divisor which grows exponentially as $n\rightarrow\infty$.
\section{On the order of $\A(L)$}\label{sec5}
Suppose $L$ and $L'$ are isotopic Latin squares. Note that if $L'=\Theta(L)$, then for any $\Gamma\in\I_n$, $\Gamma\in\A(L)$ if and only if $\Theta\Gamma\Theta^{-1}\in \A(L')$. Hence $|\A(L)|=|\A(L')|$. Since every Latin square is isotopic to some reduced Latin square, it is enough to study the orders of the autotopy groups of reduced Latin squares.
\\
The parity type of a square provides some information about its autotopy group. Browning, Stones and Wanless (\cite{Bro2012}, Lemma 4.1) showed that for a prime order $p$ there is exactly one isotopism class that contains Latin squares $L$ for which $p^2$ divides $\abs{\A(L)}$, namely, the isotopism class containing the Cayley table of $Z_p$. The following corollary of Theorem~\ref{thm1} extends that result:
\begin{corollary}
Let $L$ be a Latin square of prime order $p>2$ and of parity type $(k,m)$.
\begin{enumerate}
  \item [\Text{(i)}] If $k\neq 0$ or $m\neq0$ then $p^2\not|\abs{\A(L)}$.
  \item [\Text{(ii)}] If $k$ and $m$ are both nonzero then $p\not|\abs{\A(L)}$.
  \item [\Text{(iii)}] If the parity of $L$ is (0,0) but $L$ is not isotopic to the Cayley table of $Z_p$, then $p^2\not|\abs{\A(L)}$

\end{enumerate}

\end{corollary}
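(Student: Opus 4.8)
The plan is to read off arithmetic constraints on $a:=\abs{\A(L)}$ from the integrality of the count produced by Theorem~\ref{thm1}. Since both $a$ and the parity type are isotopy invariants (the former is noted at the start of Section~\ref{sec5}, the latter is Proposition~\ref{prop2:5}), I may assume $L\in RLS(p)$. Writing $C$ for the numerator appearing in Theorem~\ref{thm1}, so that
\[
\abs{\I_p^\prime(L)\cap RELS(p)}=\frac{C}{p^2}\,\abs{\I_p^\prime(L)},\qquad
C=\begin{cases} km+(p-k)(p-m), & k\equiv m,\\[2pt] k(p-m)+m(p-k), & k\not\equiv m,\end{cases}
\]
I observe that the left-hand side is a nonnegative integer, and a short check (each nonzero term being a product of a factor that is positive and one that is at most $p-1$) shows $C>0$ under $0\le k,m\le (p-1)/2$, so it is in fact a \emph{positive} integer. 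The key will be to compare $p$-adic valuations.

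Using (\ref{eq1:2}) I have $\abs{\I_p^\prime(L)}=p\cdot p!/a$, so the count equals $C\,p\,p!/(p^2a)$. Since $p$ is prime, Legendre's formula (equivalently Wilson's theorem) gives $v_p(p!)=1$, where $v_p$ denotes the $p$-adic valuation. As the count is a positive integer,
\[
0\le v_p\!\left(\frac{C\,p\,p!}{p^2a}\right)=v_p(C)+1+1-2-v_p(a)=v_p(C)-v_p(a),
\]
which yields the master inequality $v_p(a)\le v_p(C)$. Everything now reduces to bounding $v_p(C)$, which I compute from the congruence $C\equiv\pm 2km\pmod p$ (the sign being $+$ when $k\equiv m$ and $-$ otherwise).

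For (ii), if $k$ and $m$ are both nonzero then $0<k,m<p$ together with $p>2$ force $2km\not\equiv 0\pmod p$, so $v_p(C)=0$ and the master inequality gives $v_p(a)=0$, i.e.\ $p\nmid a$. For (i) I may assume, by the symmetry of $C$ in $k$ and $m$, that $m=0$ and $k\neq 0$ (the both-nonzero case being already settled by (ii)). A direct evaluation then gives $C=p(p-k)$ when $k$ is even and $C=pk$ when $k$ is odd; since $0<k<p$, neither $p-k$ nor $k$ is divisible by $p$, so $v_p(C)=1$ and hence $v_p(a)\le 1$, i.e.\ $p^2\nmid a$.

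For (iii) the same method is not enough: when $k=m=0$ one has $C=p^2$, so the master inequality yields only $v_p(a)\le 2$. Here I would instead appeal directly to the cited classification of Browning, Stones and Wanless (\cite{Bro2012}, Lemma~4.1), by which the unique isotopy class over a prime order $p$ containing squares with $p^2\mid a$ is that of the Cayley table of $Z_p$; since $L$ is assumed not isotopic to that table, $p^2\nmid a$. The main obstacle is precisely this last part: Theorem~\ref{thm1} cannot distinguish the Cayley table of $Z_p$ from other parity-$(0,0)$ squares, so (iii) must rely on external input rather than on the valuation argument, whereas (i) and (ii) are the genuinely new content extracted from the parity-type count.
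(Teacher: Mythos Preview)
Your argument is correct. For (ii) and (iii) it coincides with the paper's: the paper also derives (ii) from the integrality of the count in Theorem~\ref{thm1} together with (\ref{eq1:2}), and it obtains (iii) by direct appeal to the Browning--Stones--Wanless classification, just as you do.

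The one genuine difference is in (i). The paper does not use the valuation argument there at all; it simply observes that the Cayley table of $Z_p$ has parity type $(0,0)$, so any square with $(k,m)\neq(0,0)$ lies outside the unique isotopy class singled out by \cite{Bro2012}, Lemma~4.1, and hence $p^2\nmid\abs{\A(L)}$. You instead push the master inequality $v_p(a)\le v_p(C)$ one step further, computing $v_p(C)=1$ when exactly one of $k,m$ vanishes. Your route makes (i) independent of the external classification and shows that Theorem~\ref{thm1} alone already forces $p^2\nmid\abs{\A(L)}$ whenever $(k,m)\neq(0,0)$; the paper's route is shorter but leans on \cite{Bro2012} even in a case where, as your computation reveals, it is not needed.
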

\begin{proof}
Statements (i) and (iii) follow from the above mentioned result in \cite{Bro2012} and the fact that the parity type of a Cayley table of odd order is $(0,0)$. Without loss of generality we may assume that $L$ is reduced (otherwise we take a reduced Latin square in the isotopy class of $L$ and use the fact that isotopy preserves the parity type). Setting $n=p$ we look at one of the expressions for $\abs{\I_{p}^\prime(L)\cap RELS(p)}$ in (\ref{even_squares}). If just one of $k$ and $m$ is nonzero, then $p$ divides the numerator but $p^2$ does not. Thus, $p$ must divide $\abs{\I_{p}^\prime(L)}$ and by (\ref{eq1:2}) the highest power of $p$ that may divide $\abs{\A(L)}$ is 1. This proves (ii).
\end{proof}
For the proof of the next theorem the following definition will be useful:
\begin{definition}
Let $I,J\subset[n]$ with $|I|,|J|\leq n/2$. We say that a Latin square $L$ of order $n$ has \emph{parity set} $(I,J)$ if the rows (resp. columns) indexed by $I$ (resp. $J$) are all of the same parity and the rows indexed by $[n]\setminus I$ (resp. $[n]\setminus J$) are all of the opposite parity.
\end{definition}
\begin{remark}\label{rem5:1}
Permuting the columns of a Latin square preserves the first component of its parity set and permuting the rows of a Latin square preserves the second component of its parity set. Permuting the entries of a Latin square preserves its parity set.
\end{remark}
\begin{theorem}\label{thm5:5}
Let $L$ be a Latin square of order $n$ with parity type $(k,m)$, then
\begin{equation}\label{eq5:1}
\abs{\A(L)}\leq \frac{n\cdot n!}{\max\left(\binom{n}{k},\binom{n}{m}\right)}.
\end{equation}
If no column of $L$ is the product of two other columns, then
\begin{equation}\label{eq5:2}
\abs{\A(L)}\leq \frac{n!}{\binom{n}{k}}.
\end{equation}
$($Similarly, if no row of $L$ is the product of two other rows, then $\abs{\A(L)}\leq n!/\binom{n}{m}$.$)$
\end{theorem}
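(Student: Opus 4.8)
The plan is to reduce both inequalities to a single lower bound on $\abs{\I_n^\prime(L)}$, the number of reduced Latin squares isotopic to $L$, using the identity $\abs{\A(L)}=n\cdot n!/\abs{\I_n^\prime(L)}$ obtained by inverting (\ref{eq1:2}). Since $\abs{\A(L)}$ and the parity type are isotopy invariants, I may assume $L$ is reduced. It then suffices to exhibit $\max\!\left(\binom nk,\binom nm\right)$ distinct reduced squares isotopic to $L$ for (\ref{eq5:1}), and $n\binom nk$ of them for (\ref{eq5:2}).

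The first step counts reduced squares by their parity sets. For $\Theta=(\alpha,\alpha\pi_j\sigma_{\alpha^{-1}(1)}^{-1},\alpha\pi_j)\in\I_{n,L}^\prime$ as in Proposition~\ref{prop2:1}, Remark~\ref{rem5:1} (with Lemma~\ref{lem1}) shows that the column and symbol permutations fix the row-component of the parity set while the row permutation merely relabels it; hence the minority-parity row set of $\Theta(L)$ is exactly $\alpha(I)$, where $(I,J)$ is the parity set of $L$ and $\abs{I}=k<n/2$. Because $\alpha$ ranges freely over $S_n$ in Proposition~\ref{prop2:1}, $\alpha(I)$ attains every $k$-subset of $[n]$, and reduced squares with different minority row sets are distinct, so $\abs{\I_n^\prime(L)}\ge\binom nk$ and $\abs{\A(L)}\le n\cdot n!/\binom nk$. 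Applying this to the transpose $L^{T}$, which is reduced, has parity type $(m,k)$, and satisfies $\abs{\A(L^{T})}=\abs{\A(L)}$ (since $(\alpha,\beta,\gamma)\in\A(L)$ iff $(\beta,\alpha,\gamma)\in\A(L^{T})$), gives $\abs{\A(L)}\le n\cdot n!/\binom nm$ as well, and taking the larger denominator proves (\ref{eq5:1}).

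The second step gains the extra factor $n$. For each $k$-subset $T$ fix one $\alpha_T$ with $\alpha_T(I)=T$, and compare the $n$ squares $\Theta_{\alpha_T,j}(L)$ obtained by varying $j\in[n]$; all of them share the minority row set $T$, so the families attached to distinct $T$ are disjoint. The key computation, via Lemma~\ref{lem1}(i),(ii) with $\gamma=\alpha\pi_j$, is that the set of columns of $\Theta_{\alpha,j}(L)$, read as permutations, is $\{\alpha\pi_c\pi_j^{-1}\alpha^{-1}:c\in[n]\}$. Thus if $\Theta_{\alpha,j}(L)=\Theta_{\alpha,j'}(L)$ the two column sets coincide, and conjugating by $\alpha$ gives $\{\pi_c\}\pi_j^{-1}=\{\pi_c\}\pi_{j'}^{-1}$, i.e. right multiplication by $\delta=\pi_{j'}^{-1}\pi_j$ stabilizes $\{\pi_c\}$. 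Since the first column is $\pi_1=1$, we get $\delta=\pi_{c_0}$ for some $c_0$, and if $j\neq j'$ then $\delta\neq1$, so $c_0\neq1$. Then for any $c\notin\{1,c_0\}$ (which exists once $n\ge3$; smaller $n$ is trivial) we have $\pi_c\pi_{c_0}=\pi_{\rho(c)}$ with $\rho(c)\notin\{c,c_0\}$, exhibiting column $\rho(c)$ as the product of the two other columns $c$ and $c_0$. Under the hypothesis this cannot happen, so the $n$ squares are distinct, $\abs{\I_n^\prime(L)}\ge n\binom nk$, and $\abs{\A(L)}\le n!/\binom nk$; the row version follows by the same argument applied to $L^{T}$.

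The step requiring the most care is the boundary $k=n/2$ (or $m=n/2$), where the minority row set is not well defined: every reduced square has an even first row and first column, so its even-row and even-column sets always contain $1$, and the naive parity-set count drops by a factor of $2$. I would recover the missing factor by distinguishing squares through the ordered pair (even-row set, even-column set) and verifying that enough such pairs are realized. This is the main obstacle in the argument, while the generic case $k,m<n/2$ follows cleanly from the two counting steps above.
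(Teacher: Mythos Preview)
Your approach is essentially the same as the paper's: reduce to a lower bound on $\abs{\I_n^\prime(L)}$ via (\ref{eq1:2}), obtain $\binom{n}{k}$ distinct reduced squares by varying the first component of the parity set, and then gain the extra factor~$n$ by showing that two isotopisms with the same $\alpha$ but different column index $j$ can only collide if some column is a product of two others.

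There are two minor differences worth noting. For the $\binom{n}{m}$ half of (\ref{eq5:1}) you pass to the transpose, whereas the paper repeats the argument directly on the second component of the parity set; these are equivalent. For (\ref{eq5:2}), the paper computes the autotopism $\Phi=\Theta_2^{-1}\Theta_1=(1,\beta,\beta)$ explicitly and reads off $\pi_l=\pi_j\pi_r$ from how $\Phi$ moves the first column; your route --- comparing the column \emph{sets} $\{\alpha\pi_c\pi_j^{-1}\alpha^{-1}\}$ of the two images and deducing that right multiplication by $\pi_{j'}^{-1}\pi_j$ stabilises $\{\pi_c\}$ --- is a clean variant that lands on the same relation.

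Your caution about the boundary $k=n/2$ is well placed and honest: when $k=n/2$ the ``minority'' row set is not well defined, and the even-row set of a reduced square is constrained to contain~$1$, so the naive count yields only $\binom{n-1}{n/2-1}=\tfrac12\binom{n}{k}$ distinct first components. The paper's proof passes over this point without comment (it simply asserts that all $\binom{n}{k}$ subsets occur as first components, which is true, but when $k=n/2$ a square has two valid first components and the inference to $\binom{n}{k}$ distinct squares needs an extra word). So your proposal is at least as complete as the paper's here, and you have correctly located the one place where both arguments require additional care.
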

\begin{proof}
 We can assume that $L$ is a reduced Latin square. By Remark~\ref{rem5:1}, choosing all different $\alpha\in S_n$ in $(\alpha, \alpha\pi_j\sigma_{\alpha^{-1}(1)}^{-1}, \alpha\pi_j)\in \I_{n,L}^\prime$ (see Proposition~\ref{prop2:1}) will yield all $\binom{n}{k}$ distinct $I\subset [n]$ of order $k$ as the first component of the parity set of the reduced Latin squares that are isotopic to $L$. Thus $|\I_n^\prime(L)|\geq \binom{n}{k}$. By (\ref{eq1:2}) $\abs{\A(L)}\leq \frac{n\cdot n!}{\binom{n}{k}}$. Since the same argument applies by looking at the second component of the parity set, the inequality (\ref{eq5:1}) follows. Let $\Sigma=\{\alpha_1,\ldots,\alpha_{\binom{n}{k}}\}$ be a set of permutations that, when taken as the first component of corresponding $\binom{n}{k}$ isotopies on $L$, will yield $\binom{n}{k}$ distinct reduced Latin squares with distinct first parity set component. Now, let $\alpha\in\Sigma$ and suppose that $\Theta_1=(\alpha, \alpha\pi_j\sigma_{\alpha^{-1}(1)}^{-1}, \alpha\pi_j)\in \I_{n,L}^\prime$ and $\Theta_2=(\alpha, \alpha\pi_l\sigma_{\alpha^{-1}(1)}^{-1}, \alpha\pi_l)\in \I_{n,L}^\prime$ satisfy $\Theta_1(L)=\Theta_2(L)$. Let $\Phi=\Theta_2^{-1}\Theta_1$ Thus $\Phi\in\A(L)$. That is,
\begin{equation}
\Phi=(1, \sigma_{\alpha^{-1}(1)}\pi_l^{-1}\pi_j\sigma_{\alpha^{-1}(1)}^{-1}, \pi_l^{-1}\pi_j)\in\A(L).
\end{equation}
For simplicity, denote $\sigma=\sigma_{\alpha^{-1}(1)}$. Let $\beta=\sigma\pi_l^{-1}\pi_j\sigma$ and $\gamma=\pi_l^{-1}\pi_j=\sigma^{-1}\beta\sigma$. We have that $\Phi=(1,\beta,\sigma^{-1}\beta\sigma)\in\A(L)$. While applying $\Phi$ to $L$, after permuting the columns by $\beta$, the first row is $\beta$ (by Lemma~\ref{lem1}(i), since it was originally the identity permutation) and we have to apply again $\beta$ on the symbols in order to transform the first row back into the identity permutation (Lemma~\ref{lem1}(ii)). Thus $\gamma=\beta$ and we have $\Phi=(1,\beta,\beta)$. Suppose $\beta(1)=r$. After permuting the columns by $\beta$, the original first column (the identity permutation) moves to the $r$th position. Then when applying $\beta$ on the symbols The $r$th column becomes $\beta^{-1}$ (Lemma~\ref{lem1}(ii)), and since we assumed that $(1,\beta,\beta)\in\A(L)$ we must have that $\beta^{-1}=\pi_r$. But $\beta^{-1}=\gamma^{-1}=\pi_j^{-1}\pi_l$. Thus $\pi_l=\pi_j\pi_r$. If we assume that no column of $L$ is the product of two other columns, we must have that $\Theta_1(L)\neq\Theta_2(L)$. We conclude that for any $\alpha\in\Sigma$ the different $n$ isotopies obtained by taking the $n$ columns of $L$ yield $n$ distinct squares that are isotopic to $L$. Since for different elements of $\Sigma$ the squares obtained have parity sets with distinct first component, $\abs{\I_n^\prime(L)}\geq n\cdot\binom{n}{k}$. By (\ref{eq1:2}) $\abs{\A(L)}\leq \frac{n\cdot n!}{n\cdot\binom{n}{k}}=\frac{ n!}{\binom{n}{k}}$
\end{proof}
\begin{example}
The parity type of the following Latin square of order 6 is $(3,3)$ and thus, by (\ref{eq5:1}), $\abs{\A(L)}\leq (6\cdot6!)/\binom{6}{3}=216$. Indeed, the autotopy group of this square has size 216, so it is possible to reach the bound in (\ref{eq5:1}).
\begin{equation*}
\begin{array}{|c|c|c|c|c|c|}
\hline
1 & 2 & 3 & 4 & 5 & 6\\
\hline
2 & 1 & 4 & 3 & 6 & 5\\
\hline
3 & 5 & 1 & 6 & 2 & 4\\
\hline
4 & 6 & 2 & 5 & 1 & 3\\
\hline
5 & 3 & 6 & 1 & 4 & 2\\
\hline
6 & 4 & 5 & 2 & 3 & 1\\
\hline 
\end{array}
\end{equation*}
\end{example}
\section{Computing $\A(L)$ by cycle structures}\label{sec6}
Every permutation $\alpha\in S_n$ can be decomposed into a unique (up to order) product of disjoint cycles. Cycle structures of permutations were considered in the context of Latin squares in different aspects. Cavenagh, Greenhill and Wanless \cite{Cav08} considered the cycle structure of the permutation that transforms one row of a Latin square to another row. Other works \cite{Fal09, Stones11} considered the cycle structure of the permutations $\alpha$, $\beta$ and $\gamma$ in an isotopism $\Theta=(\alpha, \beta, \gamma)$, in order to derive information on Latin squares for which $\Theta$ is an autotopism. Here the cycle structure of the rows of a Latin square $L$ are considered in order to derive information on $\A(L)$.
\begin{lemma}\label{lem6:1}
Let $L$ be a reduced Latin square of order $n$ with rows $\{\sigma_i\}_{i=1}^n$. Let $\Theta=(\alpha, \beta, \gamma)\in \A(L)$. Then for each $i=1,\ldots,n$, $\sigma_{\alpha^{-1}(1)}^{-1}\sigma_{\alpha^{-1}(i)}$ has the same cycle structure as $\sigma_i$.
\end{lemma}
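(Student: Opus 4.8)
The plan is to first express how an autotopism transforms the rows of $L$, and then to read off the cycle structure from a conjugation identity.

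First I would determine the $i$th row of $\Theta(L)$ in terms of the rows of $L$. As in the proof of Proposition~\ref{prop2:1}, permuting the rows of $L$ by $\alpha$ places the old row $\sigma_{\alpha^{-1}(i)}$ in position $i$. Applying $\beta$ to the columns then sends this row to $\beta\sigma_{\alpha^{-1}(i)}$ by Lemma~\ref{lem1}(i), and applying $\gamma$ to the symbols sends it to $\beta\sigma_{\alpha^{-1}(i)}\gamma^{-1}$ by Lemma~\ref{lem1}(ii). Since $\Theta\in\A(L)$ we have $\Theta(L)=L$, so the $i$th row of $\Theta(L)$ equals $\sigma_i$, giving the key identity
\begin{equation*}
\sigma_i=\beta\,\sigma_{\alpha^{-1}(i)}\,\gamma^{-1}\qquad(i=1,\ldots,n),
\end{equation*}
equivalently $\sigma_{\alpha^{-1}(i)}=\beta^{-1}\sigma_i\gamma$.

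Next I would use that $L$ is reduced, so that its first row is the identity, i.e.\ $\sigma_1=1$. Taking $i=1$ in the identity above yields $\sigma_{\alpha^{-1}(1)}=\beta^{-1}\gamma$, and hence $\sigma_{\alpha^{-1}(1)}^{-1}=\gamma^{-1}\beta$. Combining this with $\sigma_{\alpha^{-1}(i)}=\beta^{-1}\sigma_i\gamma$ gives
\begin{equation*}
\sigma_{\alpha^{-1}(1)}^{-1}\sigma_{\alpha^{-1}(i)}=\gamma^{-1}\beta\,\beta^{-1}\sigma_i\gamma=\gamma^{-1}\sigma_i\gamma,
\end{equation*}
so $\sigma_{\alpha^{-1}(1)}^{-1}\sigma_{\alpha^{-1}(i)}$ is conjugate to $\sigma_i$. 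Since conjugate permutations have identical cycle structure, the claim follows.

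The computation is short, so the only real obstacle is bookkeeping: getting the direction of each convention right, i.e.\ that row-permutation by $\alpha$ produces $\sigma_{\alpha^{-1}(i)}$ (rather than $\sigma_{\alpha(i)}$) in row $i$, and that the column- and symbol-actions contribute a left factor $\beta$ and a right factor $\gamma^{-1}$ respectively. These are exactly the statements of Lemma~\ref{lem1} together with the row-indexing already used in Proposition~\ref{prop2:1}, so I would lean on those to avoid inverse errors; once the identity $\sigma_i=\beta\sigma_{\alpha^{-1}(i)}\gamma^{-1}$ is in hand, the conjugation together with the reduced-square normalization $\sigma_1=1$ finishes it immediately.
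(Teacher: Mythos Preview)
Your proof is correct and follows essentially the same route as the paper: track the $i$th row through the three components of the autotopism to obtain a conjugation identity, then conclude equal cycle structure. The only cosmetic difference is that the paper first substitutes the explicit form $\Theta=(\alpha,\alpha\pi_j\sigma_{\alpha^{-1}(1)}^{-1},\alpha\pi_j)$ from Proposition~\ref{prop2:1} before tracking the row, whereas you work with generic $\beta,\gamma$ and extract the relation $\sigma_{\alpha^{-1}(1)}=\beta^{-1}\gamma$ directly from the normalization $\sigma_1=1$; both arrive at the same conjugacy $\sigma_{\alpha^{-1}(1)}^{-1}\sigma_{\alpha^{-1}(i)}=\gamma^{-1}\sigma_i\gamma$.
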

\begin{proof}
By Proposition~\ref{prop2:1}, $\Theta$ has the form $\Theta=(\alpha, \alpha\pi_j\sigma_{\alpha^{-1}(1)}^{-1}, \alpha\pi_j)\in \A(L)$ for some column $\pi_j$ of $L$. We follow the $i$th row of $L$ as $\Theta$ is applied. Originally it is $\sigma_i$. After permuting the rows the $i$th row is $\sigma_{\alpha^{-1}(i)}$ (following the convention that $i$ in the $j$th place of a permutation $\alpha$ signifies that $\alpha(i)=j$). After permuting the columns by $\alpha\pi_j\sigma_{\alpha^{-1}(1)}^{-1}$ the $i$th row becomes $\alpha\pi_j\sigma_{\alpha^{-1}(1)}^{-1}\sigma_{\alpha^{-1}(i)}$ (Lemma~\ref{lem1}(i)) and after permuting the symbols, the resulting $i$th row is $\alpha\pi_j\sigma_{\alpha^{-1}(1)}^{-1}\sigma_{\alpha^{-1}(i)}(\alpha\pi_j)^{-1}$ (Lemma~\ref{lem1}(ii)). Since $\Theta$ is an autotopism of $L$ we have
\begin{equation}\label{eq6:01}
\sigma_i=\alpha\pi_j\sigma_{\alpha^{-1}(1)}^{-1}\sigma_{\alpha^{-1}(i)}(\alpha\pi_j)^{-1}
\end{equation}
Thus $\sigma_i$ and $\sigma_{\alpha^{-1}(1)}^{-1}\sigma_{\alpha^{-1}(i)}$ are conjugates and hence have the same cycle structure.
\end{proof}
\begin{theorem}\label{thm6:2}
The following is an algorithm for finding $\A(L)$ for a given reduced Latin square $L$ of order $n$.
\begin{enumerate}
\item[\Text{(1)}] Compute the cycle structures of the rows of $L$, viewed as permutations in $S_n$. Let $C_1,\ldots,C_s$ be the distinct cycle structures of the rows, sorted in some well-defined way (see \cite{Fal09}) and let $\lambda=(\lambda_{1},\lambda_{2},\ldots,\lambda_{s})$ be a partition of $n$ where each $\lambda_{i}$ is the number of rows with cycle structure $C_i$.
\item[\Text{(2)}] For each row $\sigma_k$, compute the cycle structures of the set of permutations $\{\sigma_k^{-1}\sigma_i\}_{i=1}^n$. Let $C_1^k,\ldots,C_t^k$ be the cycle structures of these permutations, sorted as in (1), and let $\lambda^k$ be the partition of $n$ corresponding to these cycle structures.
\item[\Text{(3)}] If the ordered sets $\{C_1,\ldots,C_s\}$ and $\{C_1^k,\ldots,C_t^k\}$ coincide and $\lambda=\lambda^k$ construct the set $I_k$ of permutations $\alpha\in S_n$ satisfying:\\
    \Text{(*)} For each $i$, $\sigma_{\alpha(i)}$ and $\sigma_k^{-1}\sigma_i$ have the same cycle structure.
\item[\Text{(4)}] For each $\alpha\in\cup_k I_k$ and each column $\pi_j$, $j=1,\dots,n$, let $\Theta_{\alpha,j}=(\alpha, \alpha\pi_j\sigma_{\alpha^{-1}(1)}^{-1}, \alpha\pi_j)$. Check whether $\Theta_{\alpha,j}(L)=L$. If equality holds then $\Theta_{\alpha,j}\in\A(L)$.
\end{enumerate}
\end{theorem}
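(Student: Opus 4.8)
The plan is to establish that the algorithm is correct by separating the claim into \emph{soundness} (everything it outputs lies in $\A(L)$) and \emph{completeness} (every element of $\A(L)$ is output); since each of the four steps ranges over finite sets, termination is automatic and these two properties are exactly what ``correctly finds $\A(L)$'' means. Soundness is immediate and requires no work: step~(4) retains $\Theta_{\alpha,j}$ only after explicitly verifying $\Theta_{\alpha,j}(L)=L$, so anything reported is by definition an autotopism. All the content is in completeness.

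For completeness I would begin with an arbitrary $\Theta=(\alpha,\beta,\gamma)\in\A(L)$. Because $L$ is reduced, $\Theta(L)=L\in RLS(n)$, so $\Theta\in\I_{n,L}^\prime$, and Proposition~\ref{prop2:1} forces $\Theta=(\alpha,\alpha\pi_j\sigma_{\alpha^{-1}(1)}^{-1},\alpha\pi_j)=\Theta_{\alpha,j}$, where the column index $j$ is the one determined by $\gamma=\alpha\pi_j$. Thus every autotopism already has the shape scanned in step~(4), and the only thing left to prove is that its first coordinate $\alpha$ is actually produced back in step~(3), i.e.\ that $\alpha\in\bigcup_k I_k$.

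The key step is to pick out the right value of $k$. Setting $k=\alpha^{-1}(1)$, Lemma~\ref{lem6:1} tells us that for every $i$ the permutation $\sigma_k^{-1}\sigma_{\alpha^{-1}(i)}$ has the same cycle structure as $\sigma_i$. Re-indexing via the substitution $i\mapsto\alpha(i)$ converts this into the statement that for every $i$, $\sigma_k^{-1}\sigma_i$ has the same cycle structure as $\sigma_{\alpha(i)}$ --- which is precisely condition~(*) of step~(3). Hence $\alpha\in I_k$. I would then check that this $k$ survives the pruning test at the head of step~(3): since $\alpha$ is a bijection, condition~(*) matches the multiset of cycle structures of $\{\sigma_k^{-1}\sigma_i\}_{i=1}^n$ with that of $\{\sigma_{\alpha(i)}\}_{i=1}^n=\{\sigma_i\}_{i=1}^n$, so the ordered sets of distinct cycle structures coincide and $\lambda=\lambda^k$. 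Consequently $I_k$ is built, $\alpha$ lies in it, and $\Theta=\Theta_{\alpha,j}$ is among the candidates tested (and accepted) in step~(4).

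I expect the main obstacle to be bookkeeping rather than depth: one must keep the index convention of Lemma~\ref{lem6:1} straight through the substitution $i\mapsto\alpha(i)$ in order to recognize that condition~(*) is exactly the conjugacy relation that the lemma forces, and one must argue carefully that the cheap cycle-structure-multiset comparison in step~(3) can reject a value of $k$ only when $I_k$ is genuinely empty, so that no authentic autotopism is ever discarded before reaching the verification in step~(4). Once these two points are pinned down, soundness plus completeness give that the algorithm outputs exactly $\A(L)$.
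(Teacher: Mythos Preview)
Your proposal is correct and follows essentially the same approach as the paper: the paper's proof is a two-line sketch stating that condition~(*) follows directly from Lemma~\ref{lem6:1} and that the bijectivity of $\alpha$ forces the cycle-structure multisets in step~(3) to agree, which is exactly your completeness argument with $k=\alpha^{-1}(1)$ and the re-indexing $i\mapsto\alpha(i)$. Your write-up is simply a more careful and explicit version of what the paper asserts, with the added (and welcome) observation that soundness is immediate from the verification in step~(4).
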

\begin{proof}
The condition (*) follows directly from Lemma~\ref{lem6:1}. Since $\alpha$ is a bijection the condition in step (3) must hold.
\end{proof}

\begin{remark}
Condition (*) implies that $\alpha(k)=1$ since the identity permutation has its own unique cycle structure.
\end{remark}
\begin{example}\label{ex1}
Consider the following reduced Latin square $L$ of order 7:
\begin{equation*}
\begin{array}{|c|c|c|c|c|c|c|}
\hline
 1 & 2 & 3 & 4 & 5 & 6 & 7\\
\hline
 2 & 1 & 4 & 3 & 6 & 7 & 5\\
\hline
 3 & 4 & 2 & 5 & 7 & 1 & 6\\
\hline
 4 & 6 & 7 & 1 & 2 & 5 & 3\\
\hline
 5 & 7 & 6 & 2 & 1 & 3 & 4\\
\hline
 6 & 3 & 5 & 7 & 4 & 2 & 1\\
\hline
 7 & 5 & 1 & 6 & 3 & 4 & 2\\
\hline
\end{array}
\end{equation*}
The cycle representations of the rows of $L$, grouped by cycle structure, are:
\begin{equation}\label{eq6:02}
\begin{array}{c l}
\Text{Row} & \Text{Cycle representation}\\
\hline
1   &   (1)(2)(3)(4)(5)(6)(7)\\
\hline
2   &   (1,2)(3,4)(5,7,6)\\
4   &   (1,4)(3,7)(2,5,6)\\
5   &   (1,5)(3,6)(2,4,7)\\
\hline
7   &   (4,6)(1,3,5,2,7)\\
\hline
3   &   (1,6,7,5,4,2,3)\\
6   &   (1,7,4,5,3,2,6)\\
\end{array}
\end{equation}
One set of possible $\alpha$ consists of permutations that satisfy $\alpha(1)=1$, $\alpha(7)=7$ and permute the sets $\{2,4,5\}$ and $\{3,6\}$. This set of permutations contains $3!\cdot2!=12$ permutations. (This corresponds to $k=1$ in Step (2) of the algorithm in Theorem~\ref{thm6:2}.)
\\
If we take the product of each row by $\sigma_4^{-1}$ (the inverse of the 4th row, $k=4$ in Step (2) of the algorithm in Theorem~\ref{thm6:2}) we obtain the same set of cycle structures:
\begin{equation}\label{eq6:03}
\begin{array}{c l}
i & \Text{Cycle rep. of }\sigma_4^{-1}\sigma_i \\
\hline
4   &   (1)(2)(3)(4)(5)(6)(7)\\
\hline
1   &   (1,4)(3,7)(2,6,5)\\
3   &   (1,5)(2,7)(3,4,6)\\
5   &   (1,2)(6,7)(3,5,4)\\
\hline
7   &   (2,3)(1,7,4,5,6)\\
\hline
2   &   (1,6,2,4,7,5,3)\\
6   &   (1,3,6,4,2,5,7)\\
\end{array}
\end{equation}
So, another set of possible $\alpha$'s consists of permutations that satisfy $\alpha(4)=1$, $\alpha(7)=7$ and map the set $\{1,3,5\}$ onto the set $\{2,4,5\}$ and the set $\{2,6\}$ onto the set $\{3,6\}$. Again, there are $3!\cdot2!=12$ such permutations. Taking the product of each row by $\sigma_5^{-1}$ also produces the same set of cycle structures as the original one, while taking the products of the rows with each of $\sigma_k^{-1}$, for $k=2,3,6,7$, produces sets of cycle structures that do not coincide with the original one. Thus, $\abs{\cup_k I_k}=3\cdot12=36$  and there are $36\cdot7=252$ isotopisms to check in step 4, of which only three are autotopisms. Hence $\abs{\A(L)}=3$.
\end{example}

\begin{remark}
The algorithm, as described in Theorem~\ref{thm6:2}, was tested on 100,000 randomly selected Latin squares (Jacobson-Matthews method \cite{JM96}) of each of the orders 8,9,10, and 11. It performed faster than the algorithm by McKay, Meynert and Myrvold \cite{MckayMM07}, based on ``nauty'', but slower than an improved version of ``nauty'' using vertex invariants (such as the ``train'' \cite{wan05}).
\end{remark}
The algorithm of Theorem~\ref{thm6:2} can be massively sped up by constructing the permutations $\alpha$ in Step (4), in parts, corresponding to the cycle structures $C_i^k$ in Step (3). After constructing each part we can perform Step (4) on the rows of $L$ that are permuted by the part of $\alpha$ already constructed. This may rule out most of the candidates, or produce candidates $\alpha$  while saving the time needed to consider all the parts corresponding to the cycle structures $C_i^k$. This idea is illustrated in the following example.
\begin{example}\label{ex2}
Consider the Latin square in Example~\ref{ex1}. We saw that if we take the product of each row with $\sigma_4^{-1}$ the same set of cycle structures as the original one is obtained (see (\ref{eq6:02}) and (\ref{eq6:03})). This implies that one set of possible $\alpha$ may satisfy $\alpha(7)=7$. By (\ref{eq6:01}) it follows that
\begin{equation}\label{eq6:04}
\sigma_7=\alpha\pi_j\sigma_{4}^{-1}\sigma_{7}\pi_j^{-1}\alpha^{-1}
\end{equation}
for one or more columns $\pi_j$. Let $\tau=\sigma_4^{-1}\sigma_7=(2,3)(1,7,4,5,6)$ (cycle decomposition). Since $\pi_1=1$ (the identity permutation) we have $\pi_1 \tau \pi_1^{-1} = \tau$ and by (\ref{eq6:04}) we have
\begin{equation}\label{eq6:05}
    \sigma_7=(4,6)(1,3,5,2,7)=(\alpha(2)\alpha(3))(\alpha(1),\alpha(7),\alpha(4),\alpha(5),\alpha(6)).
\end{equation}
Since we know that $\alpha(4)=1$ and $\alpha(7)=7$, by looking at the second cycle in (\ref{eq6:05}), it follows that $\alpha(5)=3$ and $\alpha(6)=5$, both in contradiction to the constrains in Example~\ref{ex1} requiring that $\alpha$ must map 5 to one of $\{2,4,5\}$ and 6 to one of $\{3,6\}$. Thus all possible $\alpha$ for $k=4$ and $j=1$ in Steps (2)-(4) of Theorem~\ref{thm6:2} are ruled out.

If we take $j=2$ in Step (4) of Theorem~\ref{thm6:2} we obtain $\pi_2 \tau \pi_2^{-1} = (1,6)(2,5,3,7,4)$ and thus, by (\ref{eq6:01}), it follows that
\begin{equation}\label{eq6:06}
    \sigma_7=(4,6)(1,3,5,2,7)=(\alpha(1),\alpha(6))(\alpha(2),\alpha(5),\alpha(3),\alpha(7),\alpha(4)).
\end{equation}
Using the fact that $\alpha(4)=1$ and $\alpha(7)=7$ it follows that $\alpha=(1,4)(2,3)$ is the only permutation that coincides with the constrains in Example~\ref{ex1}. It remains to check whether the isotopy $\Theta_{\alpha,2}(L)=(\alpha, \alpha\pi_2\sigma_{\alpha^{-1}(1)}^{-1}, \alpha\pi_2)$  satisfies $\Theta_{\alpha,2}(L)=L$.

The reader can verify that taking the row $\pi_4$ produces another candidate $\alpha=(1,5,4)(2,6,3)$, while taking the rows $\pi_3,\pi_5,\pi_6$ and $\pi_7$ yield contradictions. Thus, for the case $k=4$ in Step (2), the list of possible autotopisms to check in Step (4) was narrowed down to only 2 in $n=7$ iterations instead of $n\prod_{i=1}^s\lambda_{i}!=7\cdot2!\cdot3!=84$ in $\prod_{i=1}^s\lambda_{i}!=2!\cdot3!=12$ iterations according to the original algorithm.
\end{example}

\begin{corollary}\label{cor3}
Let $L$ be a Latin square of order $n$. Let $C_1,\ldots,C_s$ be the distinct cycle structures of the rows and let $\lambda=(\lambda_{1},\lambda_{2},\ldots,\lambda_{s})$ be a partition of $n$ where each $\lambda_{i}$ is the number of rows with cycle structure $C_i$. Then
\begin{equation}\label{eq6:1}
\abs{\A(L)}\leq n^2\prod_{i=1}^s\lambda_{i}!.
\end{equation}
If no column of $L$ is the product of two other columns then
\begin{equation}\label{eq6:2}
\abs{\A(L)}\leq n\prod_{i=1}^s\lambda_{i}!.
\end{equation}
\end{corollary}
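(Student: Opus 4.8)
The plan is to bound $\abs{\A(L)}$ by counting the candidate isotopisms $\Theta_{\alpha,j}$ produced by the algorithm of Theorem~\ref{thm6:2}, since every autotopism of $L$ occurs among them. I work with $L$ reduced, as in Theorem~\ref{thm6:2}, so that by Proposition~\ref{prop2:1} each $\Theta\in\A(L)$ has the form $\Theta=(\alpha,\alpha\pi_j\sigma_{\alpha^{-1}(1)}^{-1},\alpha\pi_j)$ for some $\alpha\in S_n$ and some column $\pi_j$; thus it suffices to count the admissible pairs $(\alpha,\pi_j)$.

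First I would count the admissible first components $\alpha$. By Lemma~\ref{lem6:1} (equivalently, condition (*) of Theorem~\ref{thm6:2}), if $\Theta\in\A(L)$ and $k=\alpha^{-1}(1)$, then for every $i$ the row $\sigma_{\alpha(i)}$ has the same cycle structure as $\sigma_k^{-1}\sigma_i$. Fix $k$ and let $I_k$ be the set of $\alpha$ satisfying this condition. Grouping the indices $i$ by the cycle structure of $\sigma_k^{-1}\sigma_i$, each such $\alpha$ must send the group attached to a cycle structure $C_t$ bijectively onto the set of the $\lambda_t$ rows of cycle structure $C_t$. Hence $I_k$ is empty unless these two partitions of $[n]$ have matching block sizes, in which case $\abs{I_k}=\prod_{i=1}^s\lambda_i!$; in all cases $\abs{I_k}\le\prod_{i=1}^s\lambda_i!$. (The block of the identity cycle structure contains only the first row, so it has size $1$, which forces $\alpha(k)=1$, consistent with $k=\alpha^{-1}(1)$.) Since $\alpha$ determines $k=\alpha^{-1}(1)$, the sets $I_k$ are disjoint, so $\abs{\bigcup_{k}I_k}=\sum_{k=1}^n\abs{I_k}\le n\prod_{i=1}^s\lambda_i!$.

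For each admissible $\alpha$, Step (4) ranges over the $n$ columns $\pi_j$, so the total number of candidate isotopisms is at most $n\cdot\abs{\bigcup_k I_k}\le n^2\prod_{i=1}^s\lambda_i!$. As $\A(L)$ is contained in this candidate set, inequality (\ref{eq6:1}) follows. For (\ref{eq6:2}) I would reuse the argument in the proof of Theorem~\ref{thm5:5}: if, for a fixed $\alpha$, two distinct columns $\pi_j$ and $\pi_l$ both gave autotopisms $\Theta_{\alpha,j},\Theta_{\alpha,l}\in\A(L)$, then $\Theta_{\alpha,j}(L)=L=\Theta_{\alpha,l}(L)$ forces $\pi_l=\pi_j\pi_r$ for some column $\pi_r$. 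Under the hypothesis that no column of $L$ is the product of two other columns this is impossible, so at most one column per $\alpha$ yields an autotopism; hence $\abs{\A(L)}\le\abs{\bigcup_k I_k}\le n\prod_{i=1}^s\lambda_i!$.

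The main obstacle is the middle step: justifying $\abs{I_k}\le\prod_{i=1}^s\lambda_i!$ by reading condition (*) as a blockwise bijection between cycle-structure classes, so that the count is a product of factorials rather than something larger, and verifying that the identity row pins down $\alpha(k)=1$. Everything else is bookkeeping of the candidate list together with an appeal to the already-established product-of-columns obstruction from Theorem~\ref{thm5:5}.
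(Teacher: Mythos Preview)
Your proof is correct and follows essentially the same route as the paper: reduce to $L$ reduced, bound the number of admissible $\alpha$ in each $I_k$ by $\prod_i\lambda_i!$ via the blockwise cycle-structure matching of condition~(*), multiply by $n$ choices of $k$ and $n$ choices of $j$ for \eqref{eq6:1}, and then invoke the product-of-columns obstruction from the proof of Theorem~\ref{thm5:5} to drop one factor of $n$ for \eqref{eq6:2}. Your write-up is in fact a bit more explicit than the paper's (you spell out the disjointness of the $I_k$ via $k=\alpha^{-1}(1)$ and the block-bijection count), but the argument is the same.
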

\begin{proof}
Without loss of generality we may assume that $L$ is reduced.
For each $k$ satisfying the condition of step (3) in Theorem~\ref{thm6:2}, there are $\prod_{i=1}^s\lambda_{i}!$ permutations $\alpha$ satisfying (*).
Since there are $n$ rows to check in step (2) and for each $\alpha$ there are $n$ isotopies $\Theta_{\alpha,j}$ to check in step (4) of Theorem~\ref{thm6:2}, Inequality~\ref{eq6:1} follows.
It was shown in the proof of Theorem~\ref{thm5:5} that if two distinct isotpisms, with the same first component $\alpha$, map $L$ to the same reduced Latin square, then $L$ must have a column that is the product of two other columns. If no such column exists, then in step (4) of Theorem~\ref{thm6:2} at most one value of $j$ satisfies $\Theta_{\alpha,j}(L)=L$. Thus Inequality~\ref{eq6:2} follows.
\end{proof}
\begin{remark}\label{rem9}
The worst case in the algorithm in Theorem~\ref{thm6:2} is when $n-1$ rows have the same cycle structure (the first row, the identity permutation, has its own cycle structure). This condition is achieved, for example, by atomic Latin squares \cite{wan05} and by Cayley tables of elementary abelian groups. In this case the bound in (\ref{eq6:1}) is $n\cdot n!$, which provides no information, as $|\I_{n,L}^\prime|=n\cdot n!$.
\end{remark}
\begin{example}
The rows of the Latin square
\begin{equation*}
\begin{array}{|c|c|c|c|c|c|}
\hline
 1 & 2 & 3 & 4 & 5 & 6\\
\hline
 2 & 1 & 4 & 3 & 6 & 5\\
\hline
 3 & 4 & 5 & 6 & 1 & 2\\
\hline
 4 & 3 & 6 & 5 & 2 & 1\\
\hline
 5 & 6 & 1 & 2 & 3 & 4\\
\hline
 6 & 5 & 2 & 1 & 4 & 3\\
\hline
\end{array}
\end{equation*}
have cycle structures
\begin{equation*}
\begin{array}{c l}
\Text{Row} & \Text{Cycle representation}\\
\hline
1   &   (1)(2)(3)(4)(5)(6)\\
\hline
2   &   (1,2)(3,4)(5,6)\\
\hline
3   &   (1,5,3)(2,6,4)\\
5   &   (1,3,5)(2,4,6)\\
\hline
4   &   (1,6,3,2,5,4)\\
6   &   (1,4,5,2,3,6)\\
\end{array}
\end{equation*}
The products of the rows with each $\sigma_k^{-1}$, $k=1,\ldots,6$, produce a set of permutations with the same cycle structures as above. Thus, the set $\cup_k I_k$ has size $2!\cdot2!\cdot6=24$, and the bound in (\ref{eq6:1}) is $24\cdot6=144$. The actual size of $\A(L)$ in this case is 72. This is the closest example found, among Latin squares of order 6 and 7, to the bound in (\ref{eq6:1}).
\end{example}

\bibliographystyle{abbrv}
\bibliography{autotopies_bib}

\end{document}